\newtheorem{theorem}{Theorem}[section]
\newtheorem{corollary}[theorem]{Corollary}
\theoremstyle{definition}
\newtheorem{definition}[theorem]{Definition}
\theoremstyle{remark}
\numberwithin{equation}{section}
\newcommand{\CC}{\mathbb C}
\newcommand{\HH}{\mathbb H}
\newcommand{\NN}{\mathbb N}
\newcommand{\QQ}{\mathbb Q}
\newcommand{\RR}{\mathbb R}
\newcommand{\ZZ}{\mathbb Z}
\newcommand{\SL}{\mathop{\mathrm {SL}}\nolimits}
\def\A-Lift{\operatorname{A-Lift}}
\def\Borch{\operatorname{Borch}}
\newcommand{\m}{\operatorname{mod}}
\begin{document}

\title[Graded rings of integral Jacobi forms]
{Graded rings of integral Jacobi forms}

\author{Valery Gritsenko}

\address{Laboratoire Paul Painlev\'{e}, Universit\'{e} de Lille, 59655 Villeneuve d'Ascq Cedex, France and National Research University ``Higher School of Economics'', Russian Federation}

\email{Valery.Gritsenko@univ-lille.fr}

\author{Haowu Wang}

\address{Max-Planck-Institut f\"{u}r Mathematik, Vivatsgasse 7, 53111 Bonn, Germany}

\email{haowu.wangmath@gmail.com}

\subjclass[2010]{11F46, 11F50, 58J26}

\date{\today}

\keywords{Jacobi forms, integral Fourier expansion, Witten and elliptic genera, Jacobi-Eisenstein series, Borcherds products}

\begin{abstract}
We determine the structure of the bigraded ring of weak Jacobi forms with 
integral Fourier coefficients. This ring is the target ring of a map 
generalising the Witten and elliptic genera and a partition function of 
$(0,2)$-model in string theory. We also determine the structure of the
graded ring of all weakly holomorphic Jacobi forms of weight zero and integral index with integral Fourier coefficients. 
These forms are the data for Borcherds products for the Siegel paramodular 
groups.  
\end{abstract}
\maketitle

\section{Introduction: Jacobi modular forms as partition functions}

Jacobi modular forms of different types  appear in number theory,
geometry, physics and the theory of affine Lie algebras as special 
partition functions.
For example, the elliptic genus of any complex variety with trivial first 
Chern class is a weak Jacobi form  of weight $0$ with integral Fourier 
coefficients (see \cite{G99a}, \cite{KYY}, \cite{T00}).
This Jacobi form generates an automorphic Borcherds product determining 
the second quantised elliptic genus of Calabi-Yau manifolds 
(see \cite{DMVV97}, \cite{G99a}, \cite{GN95}) and  partition 
functions of some string models defined on Calabi-Yau manifolds with  
elliptic fibrations. 
More generally one can define a partition function for a vector 
bundle over a complex manifold. This function is 
also  a meromorphic function of Jacobi type
(see \cite{G99b}).

Let $M=M_d$ be an (almost) complex compact manifold of (complex) 
dimension $d$ and  $E=E_r$ be a holomorphic vector bundle of rank $r$ 
over $M$.
We set  
$q=\exp(2\pi i \tau)$ and  $\zeta=\exp(2\pi i z)$, where $\tau\in \HH$, $z\in \CC$.  Let $K(M)$ be the class group of vector bundles on the variety $M$. One can define a formal  series $\mathbf{E}_{q,\zeta}\in K(M)[[q, \zeta^{\pm1}]]$ 
$$
\mathbf{E}_{q,\zeta}=  \bigotimes_{n= 0}^{\infty}
{\bigwedge}_{-\zeta^{-1}q^{n}}E^*
\otimes 
 \bigotimes_{n= 1}^{\infty}{\bigwedge}_{-\zeta q^n}  E\otimes 
 \bigotimes_{n= 1}^{\infty} S_{q^n} T_M^*
\otimes 
 \bigotimes_{n= 1}^{\infty} S_{q^n} T_M, 
$$
where $E^*$ is dual to $E$,
$T_M$ denotes the holomorphic tangent bundle of $M$, $\wedge^k$ is the $k^\text{th}$ exterior power, $S^k$ is the $k^\text{th}$ symmetric product, and
$$ 
{\bigwedge}_x E=\sum_{k\ge 0}  (\wedge^k E) x^k, \quad S_x E= \sum_{k\ge 0} (S^k E) x^k.
$$
\smallskip

{\it The modified Witten genus} (see \cite{G99b}) 
of $E_r\to M_d$ is a function of two variables
$\tau\in \HH$ and $z\in \CC$
$$
\chi(M_d, E_r;\tau,z)= q^{(r-d)/12}\zeta^{r/2}\int_{M_d}  
\exp\biggl(\frac 1{2}\bigl(c_1(E_r)-c_1(T_{M_d})\bigr)\biggr)\cdot
$$
$$
\exp\biggl(\bigl(p_1(E_r)-p_1(T_{M_d})\bigr)\cdot G_2(\tau)\biggr)
\exp\biggl(-\frac  {c_1(E_r)}{2\pi i} 
\frac {\vartheta_z}{\vartheta}(\tau,z)\biggr)
\hbox{ch}(\mathbf{E}_{q,\zeta})\hbox {td}(T_{M_d}),
$$
where 
$c_1(E_r)$ and $p_1(E_r)$ are the first Chern and Pontryagin classes
of $E_r$, 
$\hbox {td\,}$ is the Todd class, $\hbox{ch}(\mathbf{E}_{q,\zeta})$
is the Chern  character applied to each  coefficient
of the formal power series, and 
$\int_M$ denotes the evaluation of 
the top degree differential form
on the fundamental cycle of the manifold.
In the definition of $\chi(M_d, E_r;\tau,z)$ 
we use the odd Jacobi theta-series of level two 
(see \cite{Mum83})
\begin{equation}\label{theta}
{\vartheta(\tau ,z)=
q^{1/8}(\zeta^{1/2}-\zeta^{-1/2})\prod_{n\ge 1}\,(1-q^{n}\zeta)(1-q^n 
\zeta^{-1})(1-q^n)},
\end{equation}
where
 $\vartheta_z(\tau,z)=
{\dfrac{\partial \vartheta(\tau, z)}{\partial z}}$
and $ G_2(\tau)=-\frac 1{24}+\sum_{n=1}^{\infty}\sigma_1(n)\,q^n$ is the 
quasi-modular Eisenstein series of weight $2$. 

\smallskip

{\bf Examples.} {\bf 1.} Let $E=0$ be the trivial vector bundle of rank $0$.
Let us  assume that $M$ admits a spin structure and $p_1(M)=0$.
In this case, the partition function is equal to the Witten genus (see 
\cite{Wit88}), up to division by a power of the Dedekind $\eta$-function,
$$
q^{d/12}\chi(M, 0;\tau,z)=
\frac{\hbox{Witten genus\,}(M, \tau)}{\eta(\tau)^{{2d}}}.
$$
It is known that the Witten genus is a $\SL_2(\ZZ)$-modular
form of weight $2d$.

{\bf 2.} Let  $M$ be a complex compact manifold, $E=T_M$,
and $c_1(T_M)=0$. Then this partition function coincides 
with the elliptic genus of $M_d$, i.e.  the  holomorphic Euler  
characteristic of $\mathbf{E}_{q,\zeta}$ (\cite{BL00}, \cite{G99a}, \cite{KYY})
$$
\chi(M, T_M;\tau,z)=\phi_{ell}(M; \tau,z)=
\zeta^{d/2}\int_M \hbox{ch}(\mathbf{E}_{q,\zeta})\hbox{td}(T_M).
$$
It is known that the last function is a weak Jacobi form of weight $0$ and 
index $d/2$ (see \S \ref{Sec:2.1} below).
The target ring for the elliptic genus is the graded ring $J_{0,*}^{w, \ZZ}$ 
of all weak Jacobi 
forms of weight zero and integral index with integral Fourier coefficients.
The ring $J_{0,*}^{w, \ZZ}$ has four generators over $\ZZ$ with one torsion 
relation between them (see \cite{G99a}).

The function $\chi(M_d, E_r;\tau,z)$ may have a pole only at $z=0$. 
It is holomorphic if $r\ge d$ or 
$c_1(E)=0$ (over $\RR$). If $r<d$ then the modified function
$\eta(\tau)^{r-d}\vartheta(\tau,z)^{d-r}\chi (M_d, E_r, \tau, z)$ is a weak 
Jacobi form of weight $0$ and index $d/2$. For example, 
if $c_1(E_2)=\hat A(M_6)=0$  and $p_1(E_2)=p_1(T_{M_6})$, then 
$$
\chi(M_{6}, E_2; \tau, z)=-\hat A(M_6,E_2) E_{4,1}(\tau,z)\eta(\tau)^{-8},
$$
where $E_{4,1}(\tau,z)$ is a Jacobi--Eisenstein series (see \S \ref{Sec:3.2} below), and $\hat A(M)$ and $\hat A(M, E)$ are  the  $\hat A$-genus and the twisted $\hat A$-genus of the variety $M$ (see \cite{G99b}).

In this paper we describe the structure of the bigraded ring 
$J_{*,*}^{w, \ZZ}$ 
of all weak Jacobi forms  with integral Fourier coefficients, which is the ring that contains 
the partition functions $\chi(M_d, E_r;\tau,z)$.
We show that this ring has $14$ generators satisfying $10$ relations 
(see Theorem \ref{th:Gri2}).

In \S \ref{Sec:4} we also determine the structure of the graded ring of weakly 
holomorphic Jacobi forms of weight $0$ with integral Fourier coefficients 
(see Theorem \ref{th:weaklyJF}). 
These functions are the data for automorphic Borcherds products for the 
Siegel paramodular groups (see \cite{GN98}). This ring has $8$ generators of 
three different types: the modular invariant $j(\tau)$,  four generators $
\phi_{0,1},\dots, \phi_{0,4}$ of the elliptic genus type, and  three 
generators related to Jacobi--Eisenstein series that appear in formulas for 
partition functions of  type $\chi(M_{6}, E_2; \tau, z)$. 
All eight generators produce remarkable Borcherds products.
The normalised modular invariant $J(\tau)=j(\tau)-744$ defines the 
monstrous Lie algebra (see \cite{Bor92}).
Other generators determine the main Lorentzian Kac--Moody algebras in the 
Gritsenko--Nikulin classification (see \cite{GN98}).
The first generator $\phi_{0,1}$ and its Kac--Moody algebra  correspond 
to the second quantised elliptic genus of K3  surfaces
mentioned above. The generators $\phi_{0,2}$, $\phi_{0,3}$, $\phi_{0,4}$
take part in the formulas for the second quantised elliptic genus
of Calabi--Yau manifolds of dimensions $4$, $6$ and $8$.  
We suppose that the last three generators in Theorem \ref{th:weaklyJF} 
produce similar objects for 
vector bundles of some Calabi--Yau varieties.

\section{The basic Jacobi modular forms and Jacobi theta-series}
In this section we introduce the main facts related to Jacobi forms that will be useful for subsequent constructions.
We refer to the book \cite{EZ85} for the Jacobi forms of integral weights 
and indices and \cite{GN98} for the case of half-integral indices and 
weights.
\subsection{Definition of Jacobi forms}\label{Sec:2.1}
The Jacobi group $\Gamma^J(\ZZ)$ is the semidirect product of $\SL_2(\ZZ)$ 
with the integral Heisenberg group $H(\ZZ)$ which is the central extension of 
$\ZZ\times\ZZ$. The  natural model of  $\Gamma^J(\ZZ)$ 
is the quotient by $\{ \pm I\}$ of the integral maximal parabolic subgroup of 
the Siegel modular group of genus $2$ fixing an isotropic line. We refer to \cite[\S 1]{GN98} for these definitions.
Let $\chi : \Gamma^J(\ZZ) \rightarrow \CC^*$ be a character (or a multiplier 
system) of finite order. By \cite[Proposition 2.1]{CG13}, its restriction to 
$\SL_2(\ZZ)$ is a power $\upsilon_{\eta}^D$ of the multiplier system of the 
Dedekind $\eta$-function and for some $t\in \{0,1\}$ we have, for every $A\in \SL_2(\ZZ)$, $[x,y;r]\in H(\ZZ)$,
\begin{equation*}
\chi ( A \cdot [x,y;r])= 
\upsilon_{\eta}^D (A) \cdot \upsilon_{H}^t([x,y;r]), 
\quad \upsilon_{H}([x,y;r])=(-1)^{x+y+xy+r},
\end{equation*}
where $\upsilon_{H}$ is the unique binary character of $H(\ZZ)$.

\begin{definition}\label{def:JF}
Let $k\in \frac{1}{2}\ZZ$ and $t\in \frac{1}{2}\NN$. A holomorphic function 
$\phi : \HH\times \CC\to \CC$ is called a {\it weakly holomorphic} Jacobi 
form of weight $k$ and index $t$ with a character (or a multiplier system) 
of finite order $\chi: \Gamma^J(\ZZ) \rightarrow \CC^*$ if it satisfies  
\begin{align*}
\phi\left(\frac{a\tau+b}{c\tau+d},\frac{z}{c\tau+d}\right)&
=\chi(A)(c\tau+d)^ke^{2\pi i t \frac{cz^2}{c\tau+d}}\phi(\tau,z),\\
\phi(\tau,z+x\tau+y)&=\chi([x,y;0])e^{-2\pi i t (x^2\tau+2xz)}\phi(\tau,z),
\end{align*}
for $A=\left(\begin{smallmatrix}
a & b \\ 
c & d \end{smallmatrix}\right)\in \SL_2(\ZZ)$, 
$x,y\in \ZZ$
and has a Fourier expansion of the form
$$ \phi(\tau,z)=\sum_{\substack{ n\geq n_0, n\equiv \frac{D}{24} \m \ZZ\\ l\in \frac{1}{2}\ZZ}} f(n,l)\exp(2\pi i(n\tau+lz)),$$
where $n_0\in\ZZ$ is a constant and $\chi\lvert_{\SL_2(\ZZ)} = \upsilon_{\eta}^D$ with $0\leq D < 24$. 

If $\phi$ satisfies the condition
$(f(n,l) \neq 0 \Longrightarrow n \geq 0) $
then it is called a {\it weak} Jacobi form.
If $\phi$ further satisfies the condition
$(f(n,l) \neq 0 \Longrightarrow 4nt - l^2 \geq 0) $
then it is called a {\it holomorphic} Jacobi form. If $\phi$ further satisfies the stronger condition
$(f(n,l) \neq 0 \Longrightarrow 4nt - l^2 > 0)$
then it is called a Jacobi {\it cusp} form.
We denote by 
$$
J^{!}_{k,t}(\chi)\supseteq J_{k,t}^w(\chi) \supseteq
J_{k,t}(\chi) \supseteq J^{cusp}_{k,t}(\chi)
$$ 
the vector spaces of weakly holomorphic Jacobi forms of weight $k$ and index $t$ with character $\chi$ and the corresponding spaces of weak, holomorphic
and cusp Jacobi forms.
If the character is trivial, we write $J_{k,t}=J_{k,t}(1)$ for short.
\end{definition}

From the definition, it is easy to see that
$$
J^{w}_{k,t}(\upsilon_{\eta}^D\cdot\upsilon_{H}^{2t})=\eta^D\cdot J^{w}_{k-\frac{D}{2},t}(\upsilon_{H}^{2t}),
\quad\text{and}\quad k-\frac{D}{2}\in \ZZ. 
$$

\subsection{Jacobi theta functions}\label{Sec:2.2}
We recall some standard facts about Jacobi theta functions which play an important role in our constructions.
Let $q=e^{2\pi i \tau}$ and $\zeta=e^{2\pi i z}$, where $\tau \in \HH$ and $z\in \CC$. The main Jacobi theta-series is defined in 
\eqref{theta}
$$
\vartheta(\tau, z)=q^{\frac{1}{8}}\zeta^{\frac{1}{2}}
\sum_{n\in \ZZ}(-1)^nq^{\frac{n(n+1)}{2}}\zeta^{n}
\in J_{\frac{1}{2},\frac{1}{2}}(\upsilon_{\eta}^3\cdot\upsilon_{H}).
$$
We note that $\vartheta(\tau, -z)=-\vartheta(\tau, z)$ and
$\vartheta(\tau, z)=-i\vartheta_{11}(z,\tau)$ in the notation of 
\cite[Chapter 1]{Mum83}. Its set of simple zeros is $\ZZ\tau+\ZZ$. There are three other Jacobi theta functions of level two: 
$$
\vartheta_{00}(\tau,z)=
\sum_{n\in \ZZ}q^{\frac{n^2}{2}}\zeta^n,\quad
\vartheta_{01}(\tau,z)=\sum_{n\in \ZZ}(-1)^nq^{\frac{n^2}{2}}\zeta^n,
$$
$$
\vartheta_{10}(\tau,z)=q^{\frac{1}{8}}\zeta^{\frac{1}{2}}
\sum_{n\in \ZZ}q^{\frac{n(n+1)}{2}}\zeta^{n}.
$$
We can consider them as functions conjugate (in the sense of Jacobi group)
to $\vartheta(\tau, z)$ (see \cite{GW18}).
We also define the corresponding theta-constants
$$ 
\theta_{ab}(\tau)=\vartheta_{ab}(\tau,0)
\quad\text{and}\quad
\xi_{ab}(\tau,z)=\vartheta_{ab}(\tau,z)/\vartheta_{ab}(\tau,0),
$$ 
for 
$ab=00$, $01$, $10$. From their infinite product expansions, we see that 
$\xi_{00}$, $\xi_{01}$ and  $2\xi_{10}$ have integral Fourier coefficients.

\subsection{The bigraded ring of  Jacobi forms over $\CC$}\label{Sec:2.3}
For convenience, we first introduce some notations of basic modular forms 
and Jacobi forms. Let $E_k$ denote the Eisenstein series of weight $k$ on $
\SL_2(\ZZ)$. Let $\Delta=\eta^{24}$ denote the cusp form of weight $12$ on $
\SL_2(\ZZ)$. The function $E_{k,m}$ denotes the Jacobi-Eisenstein series of 
weight $k$ and index $m$ introduced in \cite{EZ85}. Throughout the paper, 
the symbol $\zeta^{\pm m}$ denotes the sum $\zeta^{m}+\zeta^{-m}$.

The structure of the ring of weak Jacobi forms over $\CC$ is well-known and 
rather simple. 
Firstly, \cite[Theorem 9.4]{EZ85} tells that the bigraded ring 
of weak Jacobi forms of even weight and integral index
$J_{2*,*}^w=
\oplus_{k\in \ZZ, t\in \NN}\,J_{2k,t}^w$  
is a polynomial algebra in two generators 
$\phi_{0,1}$ and $\phi_{-2,1}$ over the graded ring of $\SL_2(\ZZ)$-modular 
forms.
There are many different constructions for these generators (see
\cite{EZ85}, \cite{G99a}, \cite{GN98})
\begin{equation}
\begin{split}
\phi_{-2,1}(\tau,z)&=\frac{\vartheta^2(\tau,z)}{\eta^6(\tau)}=
\frac{E_6(\tau)E_{4,1}(\tau, z)-E_4(\tau)E_{6,1}(\tau, z)}
{144\Delta(\tau)}\\
&=\zeta^{\pm 1}-2+ q(-2\zeta^{\pm 2} +8\zeta^{\pm 1}-12)+O(q^2)
\in J_{-2,1}^{w},
\end{split}
\end{equation}
\begin{equation}
\begin{split}
\phi_{0,1}(\tau,z)&=-\frac{3}{\pi^2}\wp(\tau,z)\phi_{-2,1}(\tau,z)=4(\xi_{00}^2+\xi_{01}
^2+\xi_{10}^2)(\tau,z)\\
&=\zeta^{\pm 1}+10+ q(10\zeta^{\pm 2} - 64\zeta^{\pm 1}+108)+O(q^2)\in 
J_{0,1}^{w},
\end{split}
\end{equation}
where $\wp(\tau,z)\in J_{2,0}^{mer}$ is the Weierstrass function.

We note that $2\phi_{0,1}(\tau,z)$ is equal to the elliptic genus of $K3$ 
surfaces. The function $\phi_{0,1}$ itself is equal to the elliptic genus of 
Enriques surfaces.
  
For $k\in \ZZ$ and $t\in \NN$, we have (see \cite{G99a}):
\begin{align}\label{eq:structure1}
&J_{2k+1,t}^w=\phi_{-1,2}\cdot J_{2k+2,t-2}^w,& &\phi_{-1,2}(\tau,z)=
\frac{\vartheta(\tau,2z)}{\eta^3(\tau)},&
\end{align}
\begin{align}\label{eq:structure2}
&J_{2k,t+\frac{1}{2}}^w=\phi_{0,\frac{3}{2}}\cdot J_{2k,t-1}^w,& &J_{2k+1,t+
\frac{1}{2}}^w=\phi_{-1,\frac{1}{2}}\cdot J_{2k+2,t}^w,&
\end{align}
where 
\begin{align}
&\phi_{0,\frac{3}{2}}(\tau,z)=\frac{\vartheta(\tau,2z)}{\vartheta(\tau,z)},& 
&\phi_{-1,\frac{1}{2}}(\tau,z)=\frac{\vartheta(\tau,z)}{\eta^3(\tau)}.& 
\end{align}
Since the functions $\phi_{-1,2}$, $\phi_{0,\frac{3}{2}}$ and $
\phi_{-1,\frac{1}{2}}$ have  infinite product expansions, the relations 
(\ref{eq:structure1}) and 
(\ref{eq:structure2}) hold for the corresponding spaces of Jacobi forms with 
integral Fourier coefficients. 

We note that the function 
$\frac{e(M)}2\phi_{0,\frac{3}{2}}$ is equal to the elliptic genus of 
Calabi--Yau $3$-folds with Euler number $e(M)$. The function
$\frac{e(M)}{24}\phi_{0,1}\phi_{0,\frac{3}{2}}$ coincides with the elliptic 
genus of Calabi--Yau $5$-folds. It follows that $e(M)\equiv 0\mod 24$
for any Calabi--Yau $5$-folds (see \cite{G99a}).

\section{$\ZZ$-structure of different graded rings of weak Jacobi forms}

\subsection{The graded ring of integral Jacobi forms of weight $0$}\label{Sec:3.1}
We denote by $J_{k,m}^{w,\ZZ}$ the $\ZZ$-module of all weak Jacobi forms of 
weight $k$ and index $m$ with integral Fourier coefficients for $k\in\ZZ$, 
$m\in\NN$.
The main theorem of the paper is Theorem \ref{th:Gri2}.
To prove it we have to construct generators of $J_{*,*}^{w,\ZZ}$. 
Following \cite{G99a},  we introduce the graded ring
$$
J_{0,*}^{w,\ZZ}=\bigoplus_{m\in\NN} J_{0,m}^{w,\ZZ}
$$
and its ideal 
$$
J_{0,*}^{w,\ZZ}(q)=\left\{ \phi \in J_{0,*}^{w,\ZZ} : \phi(\tau,z)=\sum_{n\geq 1, l\in\ZZ}a(n,l)q^n\zeta^l \right\}
$$
of the Jacobi forms without $q^0$-term in their Fourier expansions. 

It is clear that $\phi_{-2,1} \in J_{-2,1}^{w,\ZZ}$ because it has an integral infinite product expansion due to the  Jacobi triple product.
Since $\xi_{00}$, $\xi_{01}$, $2\xi_{10}$ have integral Fourier coefficients, we have $\phi_{0,1} \in J_{0,1}^{w,\ZZ}$ (as the elliptic genus of $K3$ surfaces). 
There are three other integral basic Jacobi forms of index $2$, $3$ and $4$
which play a crucial role in the classification of Lorentzian Kac--Moody 
algebras of hyperbolic rank $3$ (see \cite[Example 2.3 and Lemma 2.5]
{GN98}). We have 
\begin{equation}
\begin{split}
\phi_{0,2}(\tau,z)&= 2(\xi_{00}+\xi_{01}+\xi_{10})(\tau,2z)\\
&=\zeta^{\pm 1} + 4  + q(\zeta^{\pm 3}-8\zeta^{\pm 2}-\zeta^{\pm 1}+16)+O(q^2)\in J_{0,2}^{w,\ZZ},
\end{split}
\end{equation}

\begin{equation}
\begin{split}
\phi_{0,3}(\tau,z)&= \frac{\vartheta(\tau,2z)^2}{\vartheta(\tau,z)^2}\\
&=\zeta^{\pm 1} + 2  + q(-2\zeta^{\pm 3}-2\zeta^{\pm 2}+2\zeta^{\pm 1}+4)+O(q^2)\in J_{0,3}^{w,\ZZ},
\end{split}
\end{equation}

\begin{equation}
\begin{split}
\phi_{0,4}(\tau,z)&= \frac{\vartheta(\tau,3z)}{\vartheta(\tau,z)}\\
&=\zeta^{\pm 1} + 1 -q(\zeta^{\pm 4}+\zeta^{\pm 3}
-\zeta^{\pm 1}-2)+O(q^2)\in J_{0,4}^{w,\ZZ}.
\end{split}
\end{equation}
Let us remark that the formula for $\phi_{0,2}$ above is different from 
the five formulas for this function from \cite{G99a} and \cite{GN98}.
In this form, it is easier to see that $\phi_{0,2}$ has integral Fourier 
coefficients.

The following  \cite[Theorem 1.9]{G99a} describes a structure of $J^{w,\ZZ}_{0,*}$.

\begin{theorem}\label{th:Gri1}
\begin{enumerate}
\item[(a)] Let $m$ be a positive integer. The module 
$$
J_{0,m}^{w,\ZZ} /J_{0,m}^{w,\ZZ}(q)=\ZZ [\psi_{0,m}^{(1)},..., \psi_{0,m}^{(m)}]
$$
is a free $\ZZ$-module of rank $m$. Moreover, there is a basis consisting of $\psi_{0,m}^{(n)}$,  $1\leq n \leq m$, with the following $q^0$-term
\begin{align*}
&[\psi_{0,m}^{(1)}]_{q^0}=\frac{1}{(12,m)}(m\zeta+(12-2m)+m\zeta^{-1}),\\
&[\psi_{0,m}^{(2)}]_{q^0}=\zeta^2-4\zeta+6-4\zeta^{-1}+\zeta^{-2},\\
&[\psi_{0,m}^{(n)}]_{q^0}=\zeta^{\pm n} +\sum_{0\leq i<n}c(i)\zeta^{\pm i}, \; c(i)\in \ZZ, \quad (3\leq n \leq m),
\end{align*}
where $(12,m)$ is the greatest common divisor of $12$ and $m$.

\item[(b)] The ideal $J^{w,\ZZ}_{0,*}(q)$ is principal. It is generated by 
\begin{equation*}
\xi_{0,6}(\tau, z) = \frac{\vartheta^{12}(\tau,z)}{\eta^{12}(\tau)}=q(\zeta^{\frac{1}{2}}-\zeta^{-\frac{1}{2}})^{12}+O(q^2)\in J_{0,6}^{w,\ZZ}.
\end{equation*}

\item[(c)] The graded ring $J^{w,\ZZ}_{0,*}$ is finitely generated over $\ZZ$. More precisely,
$$ J^{w,\ZZ}_{0,*}= \ZZ [\phi_{0,1},\phi_{0,2},\phi_{0,3},\phi_{0,4}]. $$
The functions $\phi_{0,1}$, $\phi_{0,2}$, $\phi_{0,3}$ are algebraically independent over $\CC$ and 
$4\phi_{0,4}=\phi_{0,1}\phi_{0,3}-\phi_{0,2}^2$. Moreover, we have the following formula for the discriminant
$
\xi_{0,6}=-\phi_{0,1}^2 \phi_{0,4}+9\phi_{0,1}\phi_{0,2}\phi_{0,3}-8\phi_{0,2}^3-27
\phi_{0,3}^2.
$
\end{enumerate}
\end{theorem}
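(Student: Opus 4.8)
The plan is to analyze everything through the $\ZZ$-linear constant-term map $\phi\mapsto[\phi]_{q^0}$, which sends $J_{0,m}^{w,\ZZ}$ into the Laurent polynomials $\sum_l a_l\zeta^l$. Weight $0$ forces $a_l=a_{-l}$, and for a weak form of index $m$ the ellipticity relation (together with $n\ge 0$) forces $a_l=0$ for $|l|>m$, so the image lies in the symmetric polynomials of degree $\le m$. The kernel is exactly the ideal $J_{0,m}^{w,\ZZ}(q)$, so part (a) is the computation of the image lattice and part (b) the computation of the kernel. First I would pin down the single linear relation cutting the image from dimension $m+1$ down to $m$: the Taylor coefficient $\phi(\tau,0)$ is a holomorphic modular form of weight $0$, hence constant, while the corrected second development coefficient (in the sense of Eichler--Zagier) is a modular form of weight $2$, which vanishes because $M_2(\SL_2(\ZZ))=0$. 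Reading off its constant term yields the relation $\sum_l(6l^2-m)a_l=0$, which I would verify directly on $\phi_{0,1},\dots,\phi_{0,4}$ as a sanity check. Over $\CC$ the image is contained in this $m$-dimensional hyperplane, and the explicit forms built below show it fills it; this is consistent with $\dim_\CC J_{0,m}^w-\dim_\CC J_{0,m}^w(q)$ as predicted by the structure theorem of \S\ref{Sec:2.3}.

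For part (b) I would use that structure theorem integrally. Writing $\xi_{0,6}=\Delta\,\phi_{-2,1}^6$ (indeed $\vartheta^{12}/\eta^{12}=\eta^{24}(\vartheta^2/\eta^6)^6$), expand any $\phi\in J_{0,m}^w(q)$ over $\CC$ as $\sum_b c_b\,\phi_{0,1}^{m-b}\phi_{-2,1}^b$ with $c_b\in M_{2b}$. Since $M_2=0$ there is no $b=1$ term, and the polynomials $(\zeta^{\pm1}+10)^{m-b}(\zeta^{\pm1}-2)^b$ are linearly independent (distinguished by their order of vanishing $2b$ at $\zeta=1$), so $[\phi]_{q^0}=0$ forces every $c_b$ to be a cusp form: $c_b=0$ for $2b<12$ and $c_b\in\Delta\cdot M_{2b-12}$ otherwise. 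Factoring out $\Delta\phi_{-2,1}^6=\xi_{0,6}$ exhibits $\phi=\xi_{0,6}\psi$ with $\psi\in J_{0,m-6}^w$. Integrality of $\psi$ then follows because the lowest $q$-coefficient of $\xi_{0,6}$ is the Laurent polynomial $(\zeta^{1/2}-\zeta^{-1/2})^{12}$, monic in $\zeta$ up to a unit, so the term-by-term division in the $q$-expansion preserves integral coefficients.

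The heart of the argument, and the step I expect to be hardest, is the integral refinement in part (a): identifying the image lattice exactly and producing the stated triangular basis. Solving the relation for the central coefficient gives $a_0=\frac{12}{m}\sum_{l\ge1}l^2a_l-2\sum_{l\ge1}a_l$, so an integral symmetric polynomial on the hyperplane is a genuine $q^0$-term only when $m\mid 12\sum_{l\ge1}l^2a_l$; this single congruence cuts out a sublattice of $\ZZ^m$ of index $m/(12,m)$, which is exactly where the factor $(12,m)$ in $[\psi_{0,m}^{(1)}]_{q^0}$ comes from. I would realise the claimed basis by integral combinations of monomials in $\phi_{0,1},\dots,\phi_{0,4}$: since each generator has $q^0$-term $\zeta^{\pm1}+c$, a product of $n$ of them has $q^0$-term with leading term $\zeta^{\pm n}$, and taking differences of equal-index monomials (for instance $\phi_{0,1}\phi_{0,4}-\phi_{0,2}\phi_{0,3}$, whose $q^0$-term $5\zeta^{\pm1}+2$ already realises $\psi_{0,5}^{(1)}$) produces the lower-degree leading terms through cancellation of top coefficients. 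Matching the leading coefficients $\frac{m}{(12,m)}\zeta^{\pm1},\zeta^{\pm2},\dots,\zeta^{\pm m}$ shows these forms span a rank-$m$ lattice of index $m/(12,m)$, hence the whole lattice; the delicate point is the arithmetic that the smallest attainable coefficient of $\zeta^{\pm1}$ is exactly $m/(12,m)$ and no smaller.

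Finally, part (c) follows by induction on $m$. Given $\phi\in J_{0,m}^{w,\ZZ}$, part (a) lets me subtract an integral polynomial in the generators to kill $[\phi]_{q^0}$; the remainder lies in $J_{0,m}^{w,\ZZ}(q)=\xi_{0,6}\,J_{0,m-6}^{w,\ZZ}$ by part (b), and by the inductive hypothesis together with the identity $\xi_{0,6}=-\phi_{0,1}^2\phi_{0,4}+9\phi_{0,1}\phi_{0,2}\phi_{0,3}-8\phi_{0,2}^3-27\phi_{0,3}^2$ it is again a polynomial in $\phi_{0,1},\dots,\phi_{0,4}$. The two displayed identities I would prove cheaply from (a) and (b): each difference, $\phi_{0,1}\phi_{0,3}-\phi_{0,2}^2-4\phi_{0,4}$ and the discriminant combination minus $\xi_{0,6}$, has vanishing $q^0$-term (a finite check in $\zeta^{\pm1}$), so it lies in $J_{0,4}^{w,\ZZ}(q)=0$, respectively in $J_{0,6}^{w,\ZZ}(q)=\ZZ\,\xi_{0,6}$ where one further $q$-coefficient pins the constant to $1$. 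This also explains why four generators and one relation are needed over $\ZZ$: the factor $\frac14$ prevents eliminating $\phi_{0,4}$ integrally.
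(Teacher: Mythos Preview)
The paper does not prove Theorem~\ref{th:Gri1}; it is quoted verbatim from \cite[Theorem~1.9]{G99a} and used as a black box for the rest of the article, so there is no in-paper proof to compare against.

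Your reconstruction is nonetheless sound in outline and close in spirit to the original argument. The identification of the single linear constraint on $[\phi]_{q^0}$ via the vanishing of the weight-$2$ development coefficient is exactly right, and your treatment of part~(b) is clean: the linear independence of $(\zeta^{\pm1}+10)^{m-b}(\zeta^{\pm1}-2)^b$ by order of vanishing at $\zeta=1$, forcing each $c_b$ to be cuspidal, gives the factorisation over $\CC$, and the integrality of $\psi$ then follows from Gauss's lemma applied termwise in $q$ (your ``monic up to a unit'' is precisely that $(\zeta^{1/2}-\zeta^{-1/2})^{12}$ is primitive in $\ZZ[\zeta^{\pm1}]$). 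The induction for part~(c) and the verification of the two displayed identities via $J_{0,4}^{w,\ZZ}(q)=0$ and $J_{0,6}^{w,\ZZ}(q)=\ZZ\xi_{0,6}$ are also correct.

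The one place where genuine work remains is the step you yourself flag as delicate: producing, for \emph{every} $m$, a form $\psi_{0,m}^{(1)}\in\ZZ[\phi_{0,1},\phi_{0,2},\phi_{0,3},\phi_{0,4}]$ whose $q^0$-term has $\zeta^{\pm1}$-coefficient exactly $m/(12,m)$. Your congruence argument correctly shows that no smaller value is possible, and your example $\phi_{0,1}\phi_{0,4}-\phi_{0,2}\phi_{0,3}$ for $m=5$ is right, but the recipe ``take differences of equal-index monomials'' is not yet a proof for general $m$: already at $m=7$ there is only one monomial of degree two, so one must pass to degree-three monomials and first eliminate the $\zeta^{\pm3}$ and $\zeta^{\pm2}$ contributions before the $\zeta^{\pm1}$-coefficient can be read off, and you have not shown that the resulting integer is always $m/(12,m)$ rather than a proper multiple. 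This bookkeeping (handled in \cite{G99a} by an explicit recursion) is the missing ingredient; without it, part~(a), and hence the induction in part~(c), is not complete.
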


\subsection{Weak Jacobi forms with integral coefficients}\label{Sec:3.2}
As a corollary of Theorem \ref{th:Gri1}, we give an efficient method to check
whether the Fourier coefficients of a weak Jacobi form 
are integers or not.

\begin{theorem}\label{th:Jacobi}
Let $\varphi=\sum f(n,l)q^n\zeta^l \in J_{2k,m}^{w}$, where $k\in \ZZ$, $m 
\in \NN$. If the Fourier coefficients of the first $[\frac{m+k}{6}]$ 
$q$-terms are integral, i.e. $f(n,l)\in \ZZ$ for $0\leq n \leq [\frac{m+k}
{6}]$ and $l\in\ZZ$, then all Fourier coefficients of $\varphi$ are 
integral. Here, $[x]$ is the integral part of $x$.
\end{theorem}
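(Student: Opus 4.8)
The plan is to reduce the statement to the case of weight $0$ and then run an induction on the index using the structural Theorem \ref{th:Gri1}. Throughout, the basic tool is a Gauss-type lemma: if $P\in\ZZ[\zeta^{\pm1}]$ is \emph{primitive} (the greatest common divisor of its coefficients is $1$) and $PB\in\ZZ[\zeta^{\pm1}]$ for some $B\in\QQ[\zeta^{\pm1}]$, then $B\in\ZZ[\zeta^{\pm1}]$. Applied coefficientwise in $q$, this shows that multiplying or dividing a $q$-expansion by a form whose leading $q$-coefficient is a primitive Laurent polynomial neither creates nor destroys integrality, and preserves the number of leading integral $q$-coefficients.

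First I would pass to weight $0$. Set $M=m+k$. Using that $J_{2*,*}^{w}$ is a polynomial ring in $\phi_{0,1},\phi_{-2,1}$ over the ring of $\SL_2(\ZZ)$-modular forms, every term of $\varphi$ carries a factor $\phi_{-2,1}^{b}$ with $b\ge\max(0,-k)$, so $\Phi:=\varphi\,\phi_{-2,1}^{k}$ is a genuine weak Jacobi form in $J_{0,M}^{w}$ (an honest product when $k\ge0$, and the quotient $\varphi/\phi_{-2,1}^{-k}$, which the structure theorem guarantees to be holomorphic, when $k<0$). Since $\phi_{-2,1}$ has integral coefficients and its $q^{0}$-term $\zeta-2+\zeta^{-1}$ is primitive, the Gauss lemma shows that $\varphi$ is integral if and only if $\Phi$ is, and that, because $\phi_{-2,1}^{\pm k}$ begins at $q^{0}$ and so does not shift the $q$-order, the first $[M/6]+1$ coefficients of $\varphi$ are integral if and only if those of $\Phi$ are. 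As $[(m+k)/6]=[M/6]$, the theorem is equivalent to: a form $\Phi\in J_{0,M}^{w}$ whose $q$-coefficients are integral for $0\le n\le[M/6]$ lies in $J_{0,M}^{w,\ZZ}$.

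I would prove this last statement by induction on $M$ (it is trivial for $M\le0$). Given such a $\Phi$: since by Theorem \ref{th:Gri1}(c) the integral forms span $J_{0,M}^{w}$ over $\CC$, the integral $q^{0}$-term of $\Phi$ is a $\CC$-combination $\sum_{n}a_{n}[\psi_{0,M}^{(n)}]_{q^{0}}$. The generators $\psi_{0,M}^{(n)}$ with $2\le n\le M$ have monic leading term $\zeta^{n}$, so back-substitution from the top degree is integral; the leftover is a rational multiple $a_{1}[\psi_{0,M}^{(1)}]_{q^{0}}$, and since $\gcd(m,12-2m)=(12,m)$ this Laurent polynomial is primitive, forcing $a_{1}\in\ZZ$ by the Gauss lemma. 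Thus $\Phi_{0}:=\sum_{n}a_{n}\psi_{0,M}^{(n)}\in J_{0,M}^{w,\ZZ}$ has the same $q^{0}$-term as $\Phi$, so $\Phi-\Phi_{0}$ has vanishing $q^{0}$-term and, by Theorem \ref{th:Gri1}(b), equals $\xi_{0,6}\Psi$ with $\Psi\in J_{0,M-6}^{w}$ (for $M<6$ this gives $\Psi=0$ and we are done). Because $\xi_{0,6}=q(\zeta^{1/2}-\zeta^{-1/2})^{12}+O(q^{2})$ has primitive leading $q$-coefficient, the Gauss lemma applied to the coefficients of $\Phi-\Phi_{0}$ for $1\le n\le[M/6]$ yields integrality of the $q$-coefficients of $\Psi$ for $0\le n\le[M/6]-1=[(M-6)/6]$; the induction hypothesis gives $\Psi\in J_{0,M-6}^{w,\ZZ}$, whence $\Phi=\Phi_{0}+\xi_{0,6}\Psi\in J_{0,M}^{w,\ZZ}$.

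The crux is the lone non-monic generator $\psi_{0,M}^{(1)}$: its $q^{0}$-term carries the denominator $(12,m)$, and the whole scheme works only because $\gcd(m,12-2m)=(12,m)$ makes that Laurent polynomial primitive, so Gauss's lemma still pins the last coordinate to an integer. The remaining care is pure bookkeeping — verifying that passage to weight $0$ preserves both integrality and the number of controlled $q$-coefficients, and that the loss of exactly one admissible $q$-coefficient per step matches the drop from $[M/6]$ to $[(M-6)/6]$.
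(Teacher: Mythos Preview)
Your proof is correct and follows essentially the same route as the paper's: reduce to weight $0$ via multiplication by $\phi_{-2,1}^{k}$, then induct on the index using Theorem~\ref{th:Gri1}(a) to match the $q^{0}$-term with an integral form and Theorem~\ref{th:Gri1}(b) to divide by $\xi_{0,6}$. You make explicit (via the Gauss/primitivity argument) the integrality equivalences that the paper simply asserts, and you should write $M$ rather than $m$ in the primitivity check $\gcd(M,12-2M)=(12,M)$, but otherwise the arguments coincide.
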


\begin{proof}
When $m=0$, the function $\varphi$ is independent of $z$ and, hence,
it is a 
$\SL_2(\ZZ)$-modular form. In this case, the result is clear. 
We next assume that $m\geq 1$.
We first show that the assertion is true when $k=0$. We prove this fact by 
induction on the index $m$ using the following two arguments.

Let $\varphi \in J_{0,m}^{w}$ and assume that all Fourier coefficients of 
$q^0$-term are integral. From Theorem \ref{th:Gri1} (a), there exists a weak 
Jacobi form $\psi\in J_{0,m}^{w,\ZZ}$ with the same $q^0$-term.
Therefore $(\varphi-\psi)/\xi_{0,6}\in J_{0,m-6}^{w}$ 
by Theorem \ref{th:Gri1} (b).

Let $\varphi \in J_{0,m}^{w}$ with $m<6$. If the Fourier coefficients of 
$q^0$-term are integral, then $\varphi \in J_{0,m}^{w,\ZZ}$ according to 
Theorem \ref{th:Gri1} (a), (b).

The general case is reduced  to the case $k=0$.
If $\varphi \in J_{2k,m}^{w}$, then we have that 
$\varphi\cdot \phi_{-2,1}^k \in 
J_{0,m+k}^{w}$, and $\varphi \in J_{2k,m}^{w,\ZZ}$ if and only if 
$\varphi \cdot \phi_{-2,1}^k \in J_{0,m+k}^{w,\ZZ}$.
\end{proof}

The main difference between the $\ZZ$-modules
$J_{0, m}^{w,\ZZ}$ and $J_{k, m}^{w, \ZZ}$ is the fact that  for an even
weight larger than $2$ there is a Jacobi form whose $q^0$-term is equal to $1$.

\begin{corollary}\label{cor:1JF}
For any even weight $2k\ge 4$ and 
index $m\ge 0$ there exists an integral Jacobi form
$$ 
F_{2k,m}=1+O(q)\in J^{w,\ZZ}_{2k,m}.
$$
\end{corollary}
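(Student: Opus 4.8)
The plan is to produce $F_{2k,m}$ explicitly by combining Eisenstein series with the generators already at our disposal, and then to invoke Theorem \ref{th:Jacobi} to upgrade any rational construction to an integral one. The key observation is that for even weight $2k\ge 4$ there is a genuine $\SL_2(\ZZ)$-modular form with constant term $1$, namely the normalised Eisenstein series $E_{2k}(\tau)=1+O(q)$, whereas no such modular form exists in weight $0$ or $2$. Viewing $E_{2k}$ as a weak Jacobi form of weight $2k$ and index $0$ (it is independent of $z$) already gives $F_{2k,0}=E_{2k}\in J^{w}_{2k,0}$ with $q^0$-term equal to $1$; its Fourier coefficients are integral by the von Staudt--Clausen type integrality of $E_4$ and $E_6$ together with the fact that $E_{2k}$ lies in $\ZZ[E_4,E_6]$ modulo denominators that Theorem \ref{th:Jacobi} will clear. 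For general index $m\ge 1$ I would multiply by the weight-zero index-one generator raised to the appropriate power: set $F_{2k,m}=E_{2k}\cdot\phi_{0,1}^{m}$ if one wants index $m$, but this overshoots the $q^0$-term, so instead the cleaner route is described next.

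First I would treat the base cases $2k=4$ and $2k=6$ together with $m=0$, where $F_{4,0}=E_4$ and $F_{6,0}=E_6$ have integral Fourier expansions beginning with $1$. To raise the index while keeping the leading coefficient equal to $1$, I note that $\phi_{0,1}$ has $q^0$-term $\zeta^{\pm1}+10$, so the product $E_{2k}\cdot\phi_{0,1}$ has $q^0$-term $\zeta^{\pm1}+10$ rather than $1$; to correct this one subtracts a suitable integral multiple of lower-index building blocks. Concretely, I would look for a $\ZZ$-linear combination of the weight-$2k$ index-$m$ monomials in $E_4,E_6,\phi_{0,1},\phi_{-2,1}$ whose $q^0$-term is exactly the constant $1$ (i.e. no $\zeta^{\pm j}$ terms for $j\ge 1$). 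Since the weak Jacobi forms of even weight and integral index form a free module over $\ZZ[E_4,E_6]$ on the monomials $\phi_{0,1}^{a}\phi_{-2,1}^{b}$ with $a+b=m$ by \cite[Theorem 9.4]{EZ85}, and their $q^0$-terms are explicit Laurent polynomials in $\zeta$, the existence of a combination with constant $q^0$-term is a finite linear-algebra statement over $\QQ$; the constant function $1$ lies in the $\QQ$-span because the $q^0$-terms of $E_{2k}\phi_{0,1}^{a}\phi_{-2,1}^{b}$ together span all symmetric Laurent polynomials of the appropriate degree.

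The crucial final step is integrality. Having constructed $F_{2k,m}\in J^{w}_{2k,m}$ over $\QQ$ with $q^0$-term equal to $1$ (in particular integral), I would apply Theorem \ref{th:Jacobi}: it suffices to verify that the Fourier coefficients of the first $[\frac{m+k}{6}]$ $q$-terms are integers. For the leading correction I would arrange the construction so that after subtracting integral multiples of products involving $\xi_{0,6}=\Delta\cdot(\text{stuff})$, which begins at $q^1$, the remaining freedom lets me kill non-integral coefficients order by order using the integral generators $\phi_{0,1},\dots,\phi_{0,4}$ of Theorem \ref{th:Gri1}. The hard part will be checking that the finitely many low-order coefficients can indeed all be made integral simultaneously, rather than merely rational; this is where I expect to lean most heavily on Theorem \ref{th:Gri1}(a), which guarantees an integral weak Jacobi form matching any prescribed integral $q^0$-term, combined with the principal-ideal statement (b) that lets one divide by $\xi_{0,6}$ and induct downward on the index. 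Once the low-order integrality is secured, Theorem \ref{th:Jacobi} propagates it to all coefficients, completing the proof.
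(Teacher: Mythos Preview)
Your proposal identifies the right tool---Theorem~\ref{th:Jacobi}---but the final integrality step is a genuine gap, not merely a detail to be filled in. You construct over $\QQ$ a form with $q^0$-term equal to~$1$ and then propose to ``adjust'' it so that the coefficients of $q^1,\dots,q^{N}$ (with $N=[\frac{m+k}{6}]$) become integral. The only freedom available at this point, however, consists of forms in $J^{w}_{2k,m}$ whose $q^0$-term vanishes, and for small $(2k,m)$ this space is far too small to force an arbitrary collection of non-integral higher coefficients to be integral. For instance, at $(2k,m)=(6,3)$ one has $N=1$, and the subspace with vanishing $q^0$-term is one-dimensional, spanned by $\Delta\phi_{-2,1}^3$; adding multiples of a single form cannot in general fix the four independent coefficients of the $q^1$-term. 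Your invocation of Theorem~\ref{th:Gri1}(a),(b) and of $\xi_{0,6}$ does not help here: those statements concern weight~$0$, and $\xi_{0,6}$ has index~$6$, so it contributes nothing when $m<6$. (As a minor aside, $E_{2k}$ need not have integral Fourier coefficients for $2k\ge 12$, and Theorem~\ref{th:Jacobi} does not ``clear denominators''; one simply takes $F_{2k,0}$ to be a monomial $E_4^aE_6^b$.) You yourself flag this as ``the hard part''; as written, it is not done.

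The paper closes the gap by producing the base cases explicitly rather than abstractly. The Jacobi--Eisenstein series $E_{4,1},E_{4,2},E_{4,3},E_{6,1},E_{6,2}$ already have $q^0$-term equal to the constant~$1$, and for each of them $[\frac{m+k}{6}]=0$, so Theorem~\ref{th:Jacobi} gives integrality immediately---no adjustment is needed. The lone exceptional base case is $(2k,m)=(6,3)$: there the paper writes down the specific candidate $F_{6,3}=\tfrac12(E_{6,1}\phi_{0,2}-E_6\phi_{0,3})$, computes its $q^1$-term by hand, observes that it is integral, and applies Theorem~\ref{th:Jacobi}. All remaining $F_{2k,m}$ are then obtained by explicit integral recursions, e.g.\ $F_{2k,m}=F_{2k,m-3}\phi_{0,3}-F_{2k,m-4}\phi_{0,4}$ for $m\ge4$ (the $q^0$-terms $\zeta^{\pm1}+2$ and $\zeta^{\pm1}+1$ combine to~$1$). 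The moral is that one cannot avoid naming a concrete candidate whose low-order terms can actually be checked; the Jacobi--Eisenstein series supply exactly such candidates, and that is the idea your proposal is missing.
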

\begin{proof}
If  $m=0$,  one can construct such modular form in $\tau$ 
as a product of the Eisenstein series 
$E_k(\tau)=1-\frac{4k}{B_{2k}}\sum_{n=1}^{\infty}
\sigma_{k-1}(n)q^n$ 
of weight $4$ and $6$ with integral Fourier coefficients.
For $m\geq 1$ one can define  the  Jacobi--Eisenstein series
(see \cite[\S 7]{EZ85})
$$
E_{k,m}(\tau,z)=
\frac 1{2}\sum_{\substack{c,d\in \ZZ\\ (c,d)=1}}
\sum_{l\in \ZZ}
(c\tau+d)^{-k} \exp(2\pi i \bigl(l^2
\tfrac {a\tau+b}{c\tau+d}+2l \tfrac{z}{c\tau+d}-\tfrac{cz^2}{c\tau+d}
\bigr)).
$$
In  general, the Fourier coefficients of $E_{k,m}$ are not integral (see \cite[Lemma 2.3]{GW18}).
In fact, one can use only the series $E_{k,1}$ of index one
because the series $E_{k,m}$ can be expressed in terms of $E_{k,1}$ and the Hecke type operators $V_d$, $U_d$ (see \cite[(13), p.48]{EZ85}).
By Theorem \ref{th:Jacobi}, the Jacobi--Eisenstein series $E_{4,1}$, 
$E_{4,2}$, $E_{4,3}$, $E_{6,1}$, $E_{6,2}$ have integral Fourier coefficients because their $q^0$-terms are all the constant $1$. 
Thus we can define  $F_{2k,m}=E_{2k,m}$ for $(k,m)=(2,1)$, $(2,2)$, $(2,3)$, $(3,1)$, $(3,2)$.  In addition, we put 
\begin{align}\label{F63}
F_{6,3}=\frac{1}{2}(E_{6,1}\phi_{0,2}-E_6\phi_{0,3}).
\end{align}
We calculate its $q^1$-term
$$
F_{6,3}=1+q(2\zeta^{\pm 3}-45\zeta^{\pm 2}-90\zeta^{\pm 1}-238)+O(q^2).
$$
By Theorem \ref{th:Jacobi}, the function $F_{6,3}$ has integral Fourier coefficients. 
So we finish the construction of $F_{2k,m}$ by the following recursion formulas
\begin{align*}
F_{4k,0}&=E_4^k, \qquad F_{4k+2}=E_4^{k-1}E_6,\quad k\geq 1;\\
F_{2k,m}&=F_{2k-4,0}F_{4,m}, \quad k\geq 4, \; m=1,2,3;\\
F_{2k,m}&= F_{2k,m-3}\phi_{0,3}-F_{2k,m-4}\phi_{0,4}, \quad k\geq 2, \; m\geq 4.
\end{align*}
\end{proof} 

Theorem \ref{th:Gri1} gives us also a method to produce a lot of relations 
between weak Jacobi forms, for example,  between 
generators from Corollary \ref{cor:1JF}. 
Using the structure of the divisor of $\phi_{-2,1}$ we get 
\begin{align*}
E_{4}\phi_{0,1}-E_6\phi_{-2,1}&=12 E_{4,1},\qquad
E_{4,1}\phi_{0,1}-E_{6,1}\phi_{-2,1}=12E_{4,2},\\
E_{6}\phi_{0,1}-E_4^2\phi_{-2,1}&=12 E_{6,1},\qquad
E_{6,1}\phi_{0,1}-E_4(\tau)E_{4,1}\phi_{-2,1}=12E_{6,2},\\
E_{4,1}\phi_{0,2}-E_{4}\phi_{0,3}&=2E_{4,3},\qquad
\ \, E_{4,2}\phi_{0,1}-E_{4,1}\phi_{0,2}=6E_{4,3}.
\end{align*}
Using these relations we can easily calculate the first terms in the Fourier 
expansions of the Jacobi--Eisenstein series
\begin{align*}
E_{4,1}(\tau,z)=& 
1+q(\zeta^{\pm 2}+56\zeta^{\pm 1}+126)+O(q^2),\\
E_{4,2}(\tau,z)=&1+q(14\zeta^{\pm 2}+64\zeta^{\pm 1}+84)+O(q^2) ,\\
E_{4,3}(\tau,z)=&1+q(2\zeta^{\pm 3}+27\zeta^{\pm 2}+54\zeta^{\pm 1}+74)+O(q^2),
\end{align*}
(compare with the table at the end of \cite{EZ85}).
In particular,  we obtain that
$$
E_{4,1}(\tau,z)\equiv E_{6,1}(\tau,z) \mod 24, \qquad
E_{4,2}\equiv E_{4,1}^2\mod 12.
$$
Note that $F_{6,3}$ is not a  Jacobi--Eisenstein series because 
the Fourier coefficients of $E_{6,3}$ are not integral.
(See the remark after the proof of Theorem \ref{th:Gri2}.)

We note that there is another way to see that $E_{4,m}$ has integral Fourier 
coefficients for $m=1$, $2$, $3$. For these indices we have 
$$
E_{4,m}(\tau, z)=\vartheta_{E_8}(\tau,zv_m),
\qquad v_m\in E_8, \quad (v_m,v_m)=2m
$$
where 
$$
\vartheta_{E_8}(\tau,\mathfrak{z})
=\sum_{v\in E_8}e^{\pi i (v,v)\tau+2\pi i (v,\mathfrak{z})}
$$
is the Jacobi theta series in $\mathfrak{z}\in E_8\otimes \CC$ for the root 
lattice $E_8$.

\subsection{Siegel modular forms with integral Fourier coefficients}\label{Sec:3.3}
Theorem \ref{th:Jacobi} can be applied to the theory of Siegel modular forms 
of genus $2$. 
In \cite{Igu79}, Igusa determined the structure of the space of Siegel 
modular forms of genus 2 with integral Fourier coefficients. One difficulty 
in his proof is to show that the generators have integral Fourier 
coefficients. However, Theorem \ref{th:Siegel} provides us with an efficient way to do this.
We hope that using the last theorem we can determine the 
integral structure of the graded ring of Siegel modular forms for the 
paramodular groups $\Gamma_2$ and $\Gamma_3$.

\begin{theorem}\label{th:Siegel}
Let 
$$\Phi(\tau_1,z,\tau_2)=\sum_{\substack{n,m\in\NN, l\in\ZZ\\4nm\geq l^2}}
c(n,l,m)q_1^n q_2^m \zeta^l$$
be a Siegel modular form of genus $2$ and weight $2k$, where $q_1=e^{2\pi i 
\tau_1}$, $q_2=e^{2\pi i \tau_2}$, $\zeta=e^{2\pi i z}$. If $c(n,l,m)\in \ZZ
$ for all $m\leq \left[\frac{k+1}{5} \right]$, $n\leq \left[\frac{k+m}
{6}\right]$ and $l^2\leq 4nm$, then $c(n,l,m)$ are all integral, which means 
that $\Phi$ has integral Fourier coefficients. 
\end{theorem}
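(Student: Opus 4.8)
The plan is to pass to the Fourier--Jacobi expansion and reduce everything to the integrality criterion for Jacobi forms proved in Theorem~\ref{th:Jacobi}. Writing
$$
\Phi(\tau_1,z,\tau_2)=\sum_{m\ge 0}\phi_m(\tau_1,z)\,q_2^m,
$$
each coefficient is a holomorphic Jacobi form $\phi_m\in J_{2k,m}$, and the $q_1^n\zeta^l$-coefficient of $\phi_m$ equals $c(n,l,m)$, which vanishes unless $4nm-l^2\ge 0$. First I would fix an index $m$ with $0\le m\le m_0:=\left[\frac{k+1}{5}\right]$. The hypothesis gives $c(n,l,m)\in\ZZ$ for all $n\le\left[\frac{k+m}{6}\right]$ with $l^2\le 4nm$, while the coefficients with $l^2>4nm$ vanish; this is exactly the data required by Theorem~\ref{th:Jacobi} for the weight-$2k$, index-$m$ form $\phi_m\in J^w_{2k,m}$. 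Hence $\phi_m$ has integral Fourier coefficients for every $m\le m_0$.

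It remains to propagate integrality to $\phi_m$ for $m>m_0$, and for this I would invoke the structure of genus-$2$ Siegel modular forms. The decisive fact is that the smallest index of a nonzero Fourier--Jacobi coefficient of a nonzero weight-$2k$ form is at most $\left[\frac{k+1}{5}\right]$: this follows from the finitely generated structure of the even-weight ring $\CC[E_4,E_6,\chi_{10},\chi_{12}]$ together with the divisor of the Igusa cusp form $\chi_{10}$ of weight $10$, which governs the growth of the leading Fourier--Jacobi index. Consequently the Fourier--Jacobi truncation
$$
\iota\colon \mathcal{M}_{2k}\longrightarrow\bigoplus_{m=0}^{m_0} J_{2k,m},\qquad \Phi\mapsto(\phi_0,\dots,\phi_{m_0}),
$$
is injective. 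Since $\iota$ sends integral forms to tuples of integral Jacobi forms, it embeds the integral lattice $\mathcal{M}_{2k}^{\ZZ}$ (described by Igusa \cite{Igu79}) into the lattice $\Lambda$ of integral Fourier--Jacobi tuples. By the first step $\iota(\Phi)\in\Lambda$, so the theorem reduces to the assertion that this embedding is saturated, i.e. $\iota(\mathcal{M}_{2k}^{\ZZ})=\iota(\mathcal{M}_{2k})\cap\Lambda$.

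The saturation is the main obstacle. Over $\CC$ the injectivity of $\iota$ is classical, but the integral compatibility is delicate at the primes $2$ and $3$, where $E_4,E_6,\chi_{10},\chi_{12}$ fail to generate the integral ring and Igusa's true integral generators must be used. I would establish saturation by upgrading the determination result modulo every prime $p$: a nonzero Siegel modular form mod $p$ still carries a nonzero Fourier--Jacobi coefficient of index at most $m_0$, so that $\iota$ remains injective after reduction mod $p$ and its cokernel is torsion-free. Granting this mod-$p$ vanishing --- the technical heart of the argument --- any $\Phi\in\mathcal{M}_{2k}$ with $\iota(\Phi)\in\Lambda$ lies in $\mathcal{M}_{2k}^{\ZZ}$, and therefore all its Fourier--Jacobi coefficients, hence all $c(n,l,m)$, are integral.
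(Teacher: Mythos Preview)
Your first step is fine and matches the paper: the Fourier--Jacobi expansion together with Theorem~\ref{th:Jacobi} gives integrality of $\phi_m$ for $m\le m_0=\left[\frac{k+1}{5}\right]$. But the second half of your argument takes a much heavier route than necessary and contains a genuine gap. You reduce the problem to a saturation statement for the truncation map $\iota$, which you then reformulate as injectivity of $\iota$ modulo every prime $p$. That is precisely a Sturm-type bound for Siegel modular forms mod $p$ (cf.\ the remark after the theorem and \cite{KT19}), and you do not prove it; you simply ``grant'' it. Without that ingredient your argument is incomplete. There is also a circularity concern: you invoke Igusa's integral structure theorem \cite{Igu79}, whereas the paper explicitly presents Theorem~\ref{th:Siegel} as a tool that \emph{simplifies} the proof of Igusa's result by giving an efficient integrality check for the generators.

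What you are missing is the elementary symmetry $c(n,l,m)=c(m,l,n)$, which follows from invariance of $\Phi$ under the element of $\Sp_4(\ZZ)$ swapping $\tau_1$ and $\tau_2$. This is the whole trick in the paper's proof: once $\phi_0,\dots,\phi_m$ are known to be integral, the symmetry forces the $q_1^n$-coefficients of $\phi_{m+1}$ to be integral for all $n\le m$, since $c(n,l,m+1)=c(m+1,l,n)$ is a coefficient of $\phi_n$. One then checks the arithmetic inequality $\left[\frac{k+m+1}{6}\right]\le m$ for $m\ge m_0$, so Theorem~\ref{th:Jacobi} applies and $\phi_{m+1}$ is integral. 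Induction finishes the proof with no appeal to the structure of the Siegel ring, to $\chi_{10}$, or to any mod-$p$ vanishing.
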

\begin{proof}
We first write 
$$
\Phi(\tau_1,z,\tau_2)=\sum_{m\geq 0}f_m(\tau_1,z)q_2^m,
$$
where
$$
f_m(\tau_1,z)=
\sum_{\substack{n\in\NN, l\in\ZZ\\4nm\geq l^2}}c(n,l,m)q_1^n \zeta^l.
$$
It is well-known that $f_m(\tau_1,z)$ is a holomorphic Jacobi form of 
weight $2k$ and index $m$.
Note that $c(n,l,m)=c(m,l,n)$. Assume that $f_1, \cdots, f_m$ have integral 
Fourier coefficients. Then the Fourier coefficients of the first $m$ 
$q$-terms of $f_{m+1}$ are all integral. By Theorem \ref{th:Jacobi}, 
if $[\frac{k+m+1}{6}]\leq m$ then $f_{m+1}$ has integral Fourier 
coefficients. Suppose that $\frac{k+1}{5} = x + \frac{y}{5}$ with $x\in \NN$ 
and $0\leq y \leq 4$. Fix $m\geq\left[\frac{k+1}{5} \right]$. Then we can 
write $m=x+t$ with $t\in \NN$. We have
$$
\left[\tfrac{k+m+1}{6}\right]=\left[ \tfrac{(5x+y)+(x+t)}{6}  \right]=x+
\left[\tfrac{y+t}{6}\right] \leq x + t.
$$
Hence, if $f_i$, $0\leq i \leq \left[\frac{k+1}{5} \right]$, have integral 
Fourier coefficients, then $f_j$ also has integral Fourier coefficients for 
any $j>\left[\frac{k+1}{5} \right]$.  The proof is completed by Theorem 
\ref{th:Jacobi}. 
\end{proof}

\noindent{\bf Remark.}
The above result is a very simple application of our results on Jacobi 
modular forms. One can also prove it using the so-called Sturm type 
theorems. This type of results gives congruence criteria which say that if 
the Fourier coefficients of $F(Z)$ from a certain finite set are congruent 
to zero modulo a prime $p$ then all Fourier coefficients of $F(Z)$ are 
congruent to zero (see, for example, \cite{KT19} and references there).

\subsection{The bigraded ring of weak Jacobi forms}\label{Sec:3.4}
We now can state and prove the main theorem.  

\begin{theorem}\label{th:Gri2}
The bigraded ring of all weak Jacobi forms of even weight and integral index 
with integral Fourier coefficients is finitely generated over $\ZZ$. More exactly, we have
$$
J^{w,\ZZ}_{2*,*}=\ZZ [E_4,E_6,\Delta, E_{4,1},E_{4,2},E_{4,3},
E_{6,1},E_{6,2},F_{6,3},\phi_{0,1},\phi_{0,2},\phi_{0,3},\phi_{0,4},\phi_{-2,1}],
$$
where $E_{k,m}$ are Jacobi--Eisenstein series and $F_{6,3}$ is defined in 
\eqref{F63}.
The generators $E_4$, $E_6$, $\phi_{0,1}$ and $\phi_{-2,1}$ are algebraically independent over $\CC$. 
The rest of the generators belong to 
$\QQ[E_4,E_6,\phi_{0,1},\phi_{-2,1}]$. 
More precisely, we have
\begin{align*}
(2^6\cdot 3^3)\Delta=&E_4^3-E_6^2,\\
(2^3\cdot 3)\phi_{0,2}=&\phi_{0,1}^2-E_4\phi_{-2,1}^2,\\
(2^4\cdot 3^3)\phi_{0,3}
=&\phi_{0,1}^3-3E_4\phi_{0,1}\phi_{-2,1}^2+2E_6\phi_{-2,1}^3,\\
4\phi_{0,4}=&\phi_{0,1}\phi_{0,3}-\phi_{0,2}^2,\\
12E_{4,1}=&E_4\phi_{0,1}-E_6\phi_{-2,1},\\
12 E_{6,1}=&E_6\phi_{0,1}-E_4^2\phi_{-2,1},
\end{align*}
\begin{align*}
6E_{4,2}=&E_{4,1}\phi_{0,1}-E_4\phi_{0,2}=
\frac{1}{24}
(E_4\phi_{0,1}^2-2E_6\phi_{0,1}\phi_{-2,1}+E_4^2\phi_{-2,1}^2),\\
6E_{6,2}=&E_{6,1}\phi_{0,1}-E_6\phi_{0,2}=
\frac{1}{24}
(E_6\phi_{0,1}^2-2E_4^2\phi_{0,1}\phi_{-2,1}+E_4E_6\phi_{-2,1}^2),\\
2E_{4,3}=&E_{4,1}\phi_{0,2}-E_4\phi_{0,3}
=\frac{1}{3}(E_{4,2}\phi_{0,1}-E_{4,1}\phi_{0,2})\\
 =&\frac{1}{4}(E_{4,2}\phi_{0,1}-E_4\phi_{0,3}),\\
2F_{6,3}=&E_{6,1}\phi_{0,2}-E_6\phi_{0,3}=\frac{1}{3}(E_{6,2}\phi_{0,1}-E_{6,1}\phi_{0,2})+8\Delta\phi_{-2,1}^3\\ 
=&\frac{1}{4}(E_{6,2}\phi_{0,1}-E_6\phi_{0,3})+6\Delta\phi_{-2,1}^3=2E_{6,3}+\frac{288}{61}\Delta\phi_{-2,1}^3,
\end{align*}
and
$$J^{w,\ZZ}_{2*,*}\subsetneq\ZZ\left[\frac{1}{2},\frac{1}{3}\right] [E_4,E_6,\phi_{0,1},\phi_{-2,1}].$$
\end{theorem}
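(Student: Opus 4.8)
\emph{Plan and structure over $\CC$.} I would prove the four assertions in turn. First, by \cite[Theorem 9.4]{EZ85} the complex bigraded ring is
$$
J^{w}_{2*,*}=\CC[E_4,E_6][\phi_{0,1},\phi_{-2,1}],
$$
a polynomial ring in the four forms $E_4,E_6,\phi_{0,1},\phi_{-2,1}$; being the generators of a polynomial ring, these are algebraically independent over $\CC$, which settles that clause. Writing $R$ for the $\ZZ$-subring generated by the fourteen listed forms, the inclusions $\ZZ[E_4,E_6,\phi_{0,1},\phi_{-2,1}]\subseteq R\subseteq J^{w,\ZZ}_{2*,*}$ are immediate, since each of the fourteen forms has an integral Fourier expansion. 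The whole content is then the reverse inclusion $J^{w,\ZZ}_{2*,*}\subseteq R$.

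\emph{The relations and the strict inclusion.} Each displayed identity equates two weak Jacobi forms of one and the same weight and index; since $J^{w}_{2k,m}$ is finite dimensional and a form is pinned down by finitely many Fourier coefficients, every relation reduces to a finite computation, and the numerical constants are fixed by matching a few low coefficients. Several are already in hand, e.g. $12E_{4,1}=E_4\phi_{0,1}-E_6\phi_{-2,1}$ and, by Theorem \ref{th:Gri1}(c), $4\phi_{0,4}=\phi_{0,1}\phi_{0,3}-\phi_{0,2}^2$. Each relation writes a generator as a polynomial in $E_4,E_6,\phi_{0,1},\phi_{-2,1}$ with $6$-smooth denominator (the constants being $2^6\cdot3^3,\ 2^3\cdot3,\ 2^4\cdot3^3,\ 4,\ 12,\ 6,\ 2$), whence $R\subseteq\ZZ[\tfrac12,\tfrac13][E_4,E_6,\phi_{0,1},\phi_{-2,1}]$. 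This inclusion is strict: $\tfrac16E_4$ lies in the right-hand ring, yet its $q^0$-term $\tfrac16$ is non-integral, so $\tfrac16E_4\notin J^{w,\ZZ}_{2*,*}$. Once $J^{w,\ZZ}_{2*,*}=R$ is proved, this yields the concluding $\subsetneq$.

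\emph{Integral generation: the reduction.} The heart is $J^{w,\ZZ}_{2k,m}\subseteq R$ for all $k,m$, and I would organise it around division by $\Delta$. Since $\Delta$ is non-vanishing on $\HH$ and $q^{-1}\Delta\in 1+q\ZZ[[q]]$ is a unit of $\ZZ[[q]]$, a form $\varphi\in J^{w,\ZZ}_{2k,m}$ with vanishing $q^0$-term has $\varphi/\Delta\in J^{w,\ZZ}_{2k-12,m}$, and conversely $\Delta\cdot J^{w,\ZZ}_{2k-12,m}$ is exactly the $q^0$-free subspace. This gives, for each $(k,m)$, the exact sequence
\begin{equation*}
0\longrightarrow \Delta\cdot J^{w,\ZZ}_{2k-12,m}\longrightarrow J^{w,\ZZ}_{2k,m}\xrightarrow{\ [\,\cdot\,]_{q^0}\ }\Lambda_m,
\end{equation*}
where $\Lambda_m$ is the lattice of integral symmetric Laurent polynomials in $\zeta$ of degree $\le m$. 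As $\Delta\in R$, an induction on the weight descending by $12$ (with vacuous base $J^{w}_{2k,m}=0$ for $2k<-2m$) reduces the whole theorem to one statement: for all $k,m$ the $q^0$-image of $R\cap J^{w}_{2k,m}$ contains $I_{2k,m}:=[\,J^{w,\ZZ}_{2k,m}\,]_{q^0}$. The weight-$0$ slice is already known, $J^{w,\ZZ}_{0,*}=\ZZ[\phi_{0,1},\phi_{0,2},\phi_{0,3},\phi_{0,4}]\subseteq R$ by Theorem \ref{th:Gri1}(c) (its ideal $J^{w,\ZZ}_{0,*}(q)=\Delta\phi_{-2,1}^{6}\cdot J^{w,\ZZ}_{0,*}$ being the weight-$0$ case of the division above), and the index-$0$ slice is the classical $\ZZ[E_4,E_6,\Delta]\subseteq R$.

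\emph{The $q^0$-lattice: the main obstacle.} It remains to realise every integral $q^0$-term inside $R$. By Theorem \ref{th:Gri1}(a) the weight-$0$ $q^0$-terms form a rank-$m$ lattice $L_0^{(m)}$ realised integrally by the $\phi_{0,\bullet}$; multiplying these by $F_{2k,0}=E_4^aE_6^b$ (whose $q^0$-term is $1$) transports $L_0^{(m)}$ into $R\cap J^{w}_{2k,m}$ without changing the $q^0$-term, and for $2k\ge4$ every such even weight is realised by some $E_4^aE_6^b$, while $F_{2k,m}=1+O(q)\in R$ of Corollary \ref{cor:1JF} supplies the one extra, constant direction absent at weight $0$. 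Since the $q^0$-image of any product of generators is a product of the factors' $q^0$-terms, and each factor lies in $L_0^{(\bullet)}+\ZZ\cdot 1$ (closed under multiplication, as it is the $q^0$-image of the ring $J^{w,\ZZ}_{0,*}$ together with the constants), one finds $[\,R\cap J^{w}_{2k,m}\,]_{q^0}=L_0^{(m)}+\ZZ\cdot 1$ for $2k\ge4$. The problem thus collapses to the inclusion $I_{2k,m}\subseteq L_0^{(m)}+\ZZ\cdot 1$, equivalently to showing that the ``constant component'' functional on $\Lambda_m\otimes\QQ=\QQ\cdot 1\oplus(L_0^{(m)}\otimes\QQ)$ takes integral values on every integral positive-weight form, and that its $L_0^{(m)}$-component is integral. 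This is where I expect the real difficulty to lie: it is exactly the point governed by the factor $(12,m)$ appearing in Theorem \ref{th:Gri1}(a) (controlling the admissible $\zeta^{\pm1}$-coefficients), and it is what forces the Jacobi--Eisenstein generators $E_{4,m},E_{6,m}$ with $m\le3$ and $F_{6,3}$—integral by Theorem \ref{th:Jacobi}—into the generating set and accounts for the primes $2,3$ in the denominators. The exceptional weight $2$ (where $M_2=0$, so no $E_4^aE_6^b$ is available) together with the finitely many low negative weights must be checked separately, by reducing through the relations to weights $0$ and $4$. Carrying out these integrality verifications, index by index, via explicit Smith-normal-form computations on $L_0^{(m)}+\ZZ\cdot 1$ completes the identification $J^{w,\ZZ}_{2*,*}=R$.
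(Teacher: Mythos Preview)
Your overall architecture—induction on the weight via division by $\Delta$, reducing everything to the claim that each integral $q^0$-term is already realised inside $R$—is exactly the paper's. The error is in your analysis of the $q^0$-lattice, and it manufactures an obstruction that does not exist.

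You assert that $[R\cap J^{w}_{2k,m}]_{q^0}=L_0^{(m)}+\ZZ\cdot 1$ for $2k\ge4$, and that the graded object $\bigoplus_m\bigl(L_0^{(m)}+\ZZ\cdot 1\bigr)$ is multiplicatively closed. Neither is true. For weak Jacobi forms of weight $0$ and index $m$ one has $m\sum_l f(0,l)=6\sum_l f(0,l)l^2$; hence elements of $L_0^{(5)}+\ZZ\cdot 1$ satisfy $\sum_l f(0,l)l^2\equiv 0\pmod 5$. But $F_{2k,4}\,\phi_{0,1}\in R\cap J^{w,\ZZ}_{2k,5}$ has $q^0$-term $\zeta^{\pm1}+10$ with $\sum_l f(0,l)l^2=2$, so $\zeta^{\pm1}+10\notin L_0^{(5)}+\ZZ\cdot 1$. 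This simultaneously refutes your equality, your closure claim, and the inclusion $I_{2k,5}\subseteq L_0^{(5)}+\ZZ\cdot 1$ that you set out to prove. The $(12,m)$ phenomenon in Theorem~\ref{th:Gri1}(a) is purely a weight-$0$ constraint; it evaporates at weight $\ge 4$.

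The fix is what the paper actually does, and it makes the argument \emph{easier}: for $2k\ge4$ one has $[R\cap J^{w}_{2k,m}]_{q^0}=\Lambda_m$, the full rank-$(m{+}1)$ lattice. Indeed the forms $F_{2k,m}=1+O(q)$, $F^{(1)}_{2k,m}:=F_{2k,m-1}\phi_{0,1}=(\zeta^{\pm1}+10)+O(q)$, and $F^{(i)}_{2k,m}:=F_{2k,0}\psi_{0,m}^{(i)}$ for $2\le i\le m$ lie in $R$ and have $q^0$-terms that are upper-triangular with unit diagonal in the basis $1,\zeta^{\pm1},\dots,\zeta^{\pm m}$. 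The crucial move you missed is multiplying $\phi_{0,1}$ by $F_{2k,m-1}$ (not by $F_{2k,0}$): the ``shifted'' constant $1$ absorbs the $(12,m)$ defect. No Smith-normal-form computations are needed. For weight $2$ the paper does not use the $q^0$-mechanism at all: since $\phi(\tau,0)\in M_2=0$, any $\phi\in J^{w,\ZZ}_{2,m}$ is divisible by $\phi_{-2,1}$, reducing to weight $4$ and index $m-1$; negative weights are handled directly by $J^{w,\ZZ}_{-2k,m}=\phi_{-2,1}^{\,k}J^{w,\ZZ}_{0,m-k}$.
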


\begin{proof}We divide the proof into several steps.

\textbf{(i)} By Theorem \ref{th:Gri1} (c), we have 
$$J^{w,\ZZ}_{0,*}=\ZZ [\phi_{0,1},\phi_{0,2},\phi_{0,3},\phi_{0,4}].$$
From $J_{2*,*}^w=\CC[E_4,E_6,\phi_{0,1},\phi_{-2,1} ]$, we get 
$$ J^{w,\ZZ}_{-2k,m}=\phi_{-2,1}^k J^{w,\ZZ}_{0,m-k}, \quad k>0,\; m>0.$$

\textbf{(ii)} Using Corollary \ref{cor:1JF} we find 
Jacobi forms whose $q^0$-terms contain only coefficients
$1$ or $\zeta^{\pm i}$ for $1\leq i \leq m$
when $2k\geq 4$ and $m\geq 1$. We construct them in the ring
$$
\mathcal{A}=\ZZ [E_4,E_6, E_{4,1},E_{4,2},E_{4,3},
E_{6,1},E_{6,2},F_{6,3},\phi_{0,1},\phi_{0,2},\phi_{0,3},\phi_{0,4}].
$$
We put 
\begin{align*}
F_{2k,m}^{(1)}&= F_{2k,m-1}\phi_{0,1}=\zeta^{\pm 1}+10+O(q), \quad m\geq 1,\\
F_{2k,m}^{(i)}&= F_{2k,0}\psi_{0,m}^{(i)}=\zeta^{\pm i}+c_0 +\sum_{1\leq j<i}c(j)\zeta^{\pm j}+O(q), \quad m\geq 2,
\end{align*}
where Jacobi forms $\psi_{0,m}^{(i)}$ are introduced in Theorem 
\ref{th:Gri1} (a) and $c(j)\in \ZZ$ and $2\leq i \leq m$. 

\textbf{(iii)} 
For arbitrary $\phi \in J_{2k,m}^{w,\ZZ}$ with $k\geq 2$, there exists a weak Jacobi form $\psi \in \mathcal{A}$ such that 
$$
\phi-\psi \in J_{2k,m}^{w,\ZZ}(q) \quad \text{and} \quad 
(\phi-\psi)/\Delta\in J_{2k-12,m}^{w,\ZZ}.
$$ 
For arbitrary $\phi \in J_{2,m}^{w,\ZZ}$, we have $\phi \in \phi_{-2,1}
J_{4,m-1}^{w,\ZZ} $ because $\phi(\tau,0)=0$. 

\textbf{(iv)} By means of steps (i) and (iii), we complete the proof by 
induction on the weight $2k$. In order to prove these relations except the last 
one for $F_{6,3}$,  we only need to check that their $q^0$-terms are the 
same because the corresponding spaces of Jacobi forms are of dimension one. 
For the last one, we need to compare their $q^0$-terms and $q^1$-terms.
\end{proof}

\noindent
{\bf Remark.}
The above theorem was announced without a proof in \cite[Theorem 2.3]{G99b} 
where the function $E_{6,3}'=E_{6,3}+\frac{22}{61}\Delta\phi_{-2,1}^3$ 
was used instead of $F_{6,3}$. The last formula shows that 
the Jacobi--Eisenstein series $E_{6,3}$ has non-integral Fourier 
coefficients.
In fact, we have $F_{6,3}=E_{6,3}'+2\Delta\phi_{-2,1}^3$ and there is a 
beautiful formula
\begin{equation}
E_{6,3}'=\theta_{01}^6\vartheta_{01}^6
+\frac{3}{2}\theta_{01}^4\theta_{10}^2\vartheta_{01}^4\vartheta_{10}^2
-\frac{3}{2}\theta_{01}^2\theta_{10}^4\vartheta_{01}^2\vartheta_{10}^4
-\theta_{10}^6\vartheta_{10}^6,
\end{equation} 
(see \S 2.2).
Since $\frac{1}{2}\theta_{10}$ has integral Fourier coefficients, the 
function $E_{6,3}'$ also has integral Fourier coefficients. 
\smallskip

Using Theorem \ref{th:Gri2} and relations 
\eqref{eq:structure1} and \eqref{eq:structure2} we get

\begin{corollary}\label{cor:Gri2}
The bigraded ring  $J^{w,\ZZ}_{*,*/2}$  of weak Jacobi forms of integral weights and 
half-integral indices with integral Fourier coefficients 
is finitely generated over $\ZZ$ and equal to
$$
\ZZ\left[E_4,E_6,\Delta, E_{4,1},E_{4,2},E_{4,3},
E_{6,1},E_{6,2},F_{6,3},\phi_{0,1},\phi_{0,2},\phi_{0,\frac{3}{2}},
\phi_{0,4},\phi_{-1,\frac{1}{2}}\right].
$$
\end{corollary}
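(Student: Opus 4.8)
The plan is to deduce the corollary from Theorem~\ref{th:Gri2} by observing that the two generators replaced in passing from that theorem to this statement, namely $\phi_{0,3}$ and $\phi_{-2,1}$, are squares of the new generators $\phi_{0,\frac{3}{2}}$ and $\phi_{-1,\frac{1}{2}}$. Write $R$ for the ring on the right-hand side of the asserted equality. The inclusion $R\subseteq J^{w,\ZZ}_{*,*/2}$ is immediate: every listed generator is a weak Jacobi form of integral weight and half-integral index, and has integral Fourier coefficients (for $\phi_{0,\frac{3}{2}}$ and $\phi_{-1,\frac{1}{2}}$ this follows from their integral infinite product expansions recorded in \S\ref{Sec:2.2}), and the set of such forms is closed under multiplication. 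So the content is the reverse inclusion.

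First I would record three identities that follow directly from the theta-quotient formulas of \S\ref{Sec:2.2}--\S\ref{Sec:3.1}:
\begin{equation*}
\phi_{0,\frac{3}{2}}^2=\phi_{0,3},\qquad
\phi_{-1,\frac{1}{2}}^2=\phi_{-2,1},\qquad
\phi_{-1,\frac{1}{2}}\cdot\phi_{0,\frac{3}{2}}=\phi_{-1,2}.
\end{equation*}
The first two come from squaring $\vartheta(\tau,2z)/\vartheta(\tau,z)$ and $\vartheta(\tau,z)/\eta^3(\tau)$; the third is the telescoping product $\tfrac{\vartheta(\tau,z)}{\eta^3(\tau)}\cdot\tfrac{\vartheta(\tau,2z)}{\vartheta(\tau,z)}=\tfrac{\vartheta(\tau,2z)}{\eta^3(\tau)}$. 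Consequently $R$ contains $\phi_{0,3}$, $\phi_{-2,1}$ and $\phi_{-1,2}$, and hence, by Theorem~\ref{th:Gri2}, contains all of $J^{w,\ZZ}_{2*,*}$, i.e. every integral weak Jacobi form of even weight and integral index. This is the base case.

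Next I would treat an arbitrary $\phi\in J^{w,\ZZ}_{k,t}$ with $k\in\ZZ$, $t\in\frac{1}{2}\NN$, by splitting into cases on the parity of $k$ and on whether $t$ is integral, reducing each non-base case to the base case in a single step via the integral forms of \eqref{eq:structure1} and \eqref{eq:structure2}. Explicitly: if $k$ is odd and $t$ integral (so $t\ge2$, the space being zero otherwise), then $\phi=\phi_{-1,2}\,\psi$ with $\psi\in J^{w,\ZZ}_{k+1,t-2}$; if $k$ is even and $t$ half-integral, then $\phi=\phi_{0,\frac{3}{2}}\,\psi$ with $\psi\in J^{w,\ZZ}_{k,t-3/2}$; and if $k$ is odd and $t$ half-integral, then $\phi=\phi_{-1,\frac{1}{2}}\,\psi$ with $\psi\in J^{w,\ZZ}_{k+1,t-1/2}$. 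In each case the cofactor $\psi$ has even weight and integral index, hence lies in $R$ by the base case, while the multiplier $\phi_{-1,2}$, $\phi_{0,\frac{3}{2}}$ or $\phi_{-1,\frac{1}{2}}$ lies in $R$ by the identities above; therefore $\phi\in R$.

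I do not expect a genuine obstacle: the corollary is bookkeeping layered on Theorem~\ref{th:Gri2}. The only point needing care is that the structure relations \eqref{eq:structure1}--\eqref{eq:structure2} hold integrally, i.e. that the quotient of an integral form by $\phi_{-1,2}$, $\phi_{0,\frac{3}{2}}$ or $\phi_{-1,\frac{1}{2}}$ again has integral Fourier coefficients. This is precisely the remark made after \eqref{eq:structure2} in \S\ref{Sec:2.3}: each of these three multipliers has an integral infinite product expansion, so the associated division is integral, and no new computation is required.
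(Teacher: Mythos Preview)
Your proposal is correct and follows exactly the approach the paper indicates: reduce to Theorem~\ref{th:Gri2} via the integral structure relations \eqref{eq:structure1}--\eqref{eq:structure2}, using the identities $\phi_{0,\frac{3}{2}}^2=\phi_{0,3}$, $\phi_{-1,\frac{1}{2}}^2=\phi_{-2,1}$, $\phi_{0,\frac{3}{2}}\phi_{-1,\frac{1}{2}}=\phi_{-1,2}$ that the paper records immediately after the corollary. You have simply written out in detail what the paper compresses into the single sentence ``Using Theorem~\ref{th:Gri2} and relations \eqref{eq:structure1} and \eqref{eq:structure2} we get.''
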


Note that 
$$
\phi_{0,\frac{3}{2}}^2=\phi_{0,3},\quad
\phi_{-1,\frac{1}{2}}^2=\phi_{-2,1},\quad
\phi_{0,\frac{3}{2}}\phi_{-1,\frac{1}{2}}=\phi_{-1,2}.
$$

\section{Integral weakly holomorphic Jacobi forms and Borcherds products}\label{Sec:4}
In this section, we study the graded ring of all weakly holomorphic Jacobi 
forms of weight $0$ with integral Fourier coefficients. By the well-known 
isomorphism between vector-valued modular forms and Jacobi forms (see 
\cite{CG13}) or \cite{EZ85}, the space $J_{0,m}^{!}$ of weakly holomorphic 
Jacobi forms of weight $0$ and index $m$ is isomorphic to the space of 
nearly holomorphic modular forms of weight $-\frac{1}{2}$ for the Weil 
representation associated to the discriminant form of the lattice $A_1(m)$, 
i.e. the lattice $\ZZ$ of rank $1$
with bilinear form $2mx^2$. By means of this 
isomorphism, as a variant of Borcherds products (see \cite{Bor98}) at a one-dimensional cusp of a modular variety,  
Gritsenko and Nikulin \cite{GN98} constructed a lifting from weakly 
holomorphic Jacobi forms of weight $0$ to meromorphic Siegel paramodular 
forms.
 
Let $\phi \in J_{0,m}^{!,\ZZ}$ and $\Gamma_m$ be the paramodular 
group of polarisation $m$. 
Then the Borcherds product of $\phi$, denoted by $
\Borch(\phi)$, is a meromorphic paramodular form for $\Gamma_m$ with a 
character (see \cite[Theorem 2.1]{GN98}). 
The following theorem gives an explicit description of $J_{0,m}^{!,\ZZ}$.

\begin{theorem}\label{th:weaklyJF}
The graded ring of all weakly holomorphic Jacobi forms of weight $0$ and 
integral index with integral Fourier coefficients is finitely generated 
over $\ZZ$. Moreover, we have
$$
J^{!,\ZZ}_{0,*}=\ZZ \left[j,\phi_{0,1},\phi_{0,2},\phi_{0,3},\phi_{0,4},
\frac{E_4^2E_{4,1}}{\Delta},\frac{E_4^2E_{4,2}}{\Delta},\frac{E_4^2E_{4,3}}
{\Delta}\right],
$$
where $j={E_4^3}/{\Delta}$. The generators $\phi_{0,1}$, $\phi_{0,2}$ and $
\phi_{0,3}$ are algebraically independent over $\CC$. The other generators 
satisfy the following relations:
\begin{align*}
&4\phi_{0,4}=\phi_{0,1}\phi_{0,3}-\phi_{0,2}^2,\\
&\left( \phi_{0,1}^2-24\phi_{0,2}\right)^3=j \left(-\phi_{0,1}^2 
\phi_{0,4}+9\phi_{0,1}\phi_{0,2}\phi_{0,3}-8\phi_{0,2}^3-27\phi_{0,3}^2  
\right),\\
&6\left(\frac{E_4^2E_{4,1}}{\Delta} \right)^2-j
\phi_{0,1}\left(\frac{E_4^2E_{4,1}}{\Delta} \right)+72j\phi_{0,1}
^2+j(j-1728)\phi_{0,2}=0,\\
&6\frac{E_4^2E_{4,2}}{\Delta}=\left(\frac{E_4^2E_{4,1}}{\Delta} \right)
\phi_{0,1}-j\phi_{0,2},\\
&2\frac{E_4^2E_{4,3}}{\Delta}=\left(\frac{E_4^2E_{4,1}}{\Delta} \right)
\phi_{0,2}-j\phi_{0,3}.
\end{align*}
\end{theorem}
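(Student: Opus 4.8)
The plan is to combine the structure theorems for weak Jacobi forms (Theorems \ref{th:Gri1} and \ref{th:Gri2}) with an induction on the order of the pole in $q$. The first step I would record is the clean reformulation
$$
J^{!,\ZZ}_{0,m}=\bigcup_{N\ge 0}\Delta^{-N}J^{w,\ZZ}_{12N,m},
$$
which holds because $\Delta^{-1}=q^{-1}\prod_{n\ge1}(1-q^n)^{-24}$ has integral Fourier coefficients: if $\phi\in J^{!,\ZZ}_{0,m}$ has a pole of order $N$ in $q$, then $\Delta^{N}\phi\in J^{w,\ZZ}_{12N,m}$, and conversely $\Delta^{-N}\Phi\in J^{!,\ZZ}_{0,m}$ for every $\Phi\in J^{w,\ZZ}_{12N,m}$. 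In particular the eight listed generators all lie in $J^{!,\ZZ}_{0,*}$: the forms $\phi_{0,i}$ are integral weak Jacobi forms by Theorem \ref{th:Gri1}, $j=E_4^3/\Delta$ is the usual $q$-integral modular invariant, and each $E_4^2E_{4,i}/\Delta$ is integral because $E_4$, $E_{4,i}$ and $\Delta^{-1}$ have integral expansions (the integrality of $E_{4,i}$ for $i=1,2,3$ being part of Corollary \ref{cor:1JF}).

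Write $R$ for the ring generated by the eight elements. To prove $J^{!,\ZZ}_{0,*}\subseteq R$ I would induct on the pole order $N$ of $\phi\in J^{!,\ZZ}_{0,m}$. The base case $N=0$ is the weak case, settled by Theorem \ref{th:Gri1}(c), which gives $\phi\in\ZZ[\phi_{0,1},\phi_{0,2},\phi_{0,3},\phi_{0,4}]\subseteq R$. For the inductive step, the coefficient $\chi_N(\zeta)$ of $q^{-N}$ in $\phi$ is exactly the $q^0$-term of $\Delta^N\phi\in J^{w,\ZZ}_{12N,m}$; since the $q^0$-term of any index-$m$ weak Jacobi form is a symmetric Laurent polynomial in $\zeta$ of degree at most $m$ (from the theta decomposition, the coefficient of $q^0\zeta^l$ corresponds to the singular index $-l^2$ and hence vanishes unless $l^2\le m^2$), $\chi_N$ is such a polynomial with integral coefficients. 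The goal is then to produce $\Psi\in R$ of index $m$ and pole order at most $N$ whose $q^{-N}$-coefficient equals $\chi_N$; subtracting, $\phi-\Psi$ has pole order at most $N-1$ and integral coefficients, so it lies in $R$ by induction, whence $\phi\in R$.

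The realization of $\Psi$ is the heart of the argument. Because $j$ and each $g_i:=E_4^2E_{4,i}/\Delta$ begin with $q^{-1}$ having leading coefficient $1$ (constant in $\zeta$), a product $j^{\,N-s}g_{i_1}\cdots g_{i_s}\,\Theta$ with $\Theta\in J^{w,\ZZ}_{0,m'}$ and $m'=m-(i_1+\cdots+i_s)\ge 0$ has pole order $N$ and $q^{-N}$-coefficient equal to the $q^0$-term $[\Theta]_{q^0}$. Thus the lattice of achievable leading coefficients is the sum of the lattices $L_{m'}$ of $q^0$-terms of weight-$0$ index-$m'$ integral weak Jacobi forms (with $L_0=\ZZ$), taken over all admissible index splittings, where each $L_{m'}$ is described explicitly in Theorem \ref{th:Gri1}(a). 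The subtlety is that $L_m$ by itself, being of rank $m$ inside the rank-$(m+1)$ space of symmetric degree-$\le m$ Laurent polynomials, is one dimension short, and the pole-order-one generators $g_1,g_2,g_3$ must supply the missing low-degree directions. I expect this to be the main obstacle: one must verify, using the explicit basis of Theorem \ref{th:Gri1}(a) (including the delicate $\tfrac{1}{(12,m)}$ normalisation of $\psi^{(1)}_{0,m}$), that these lattices sum over $\ZZ$ to contain the whole lattice of $q^0$-terms of integral weight-$12N$ index-$m$ weak forms, so that every $\chi_N$ is matched. For $m=1$ this is immediate: $L_1=\ZZ(\zeta^{\pm1}+10)$ together with the constant $1$ produced by $g_1$ already fills the full degree-$\le1$ integral lattice, since $a_0+a_1\zeta^{\pm1}=a_1(\zeta^{\pm1}+10)+(a_0-10a_1)\cdot 1$; the general case is handled index by index in the same spirit.

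Finally, the algebraic independence of $\phi_{0,1},\phi_{0,2},\phi_{0,3}$ over $\CC$ is inherited from Theorem \ref{th:Gri1}(c), and the stated relations are verified directly from the relations already established in Theorems \ref{th:Gri1} and \ref{th:Gri2}. For instance $\phi_{0,1}^2-24\phi_{0,2}=E_4\phi_{-2,1}^2$ and $\xi_{0,6}=\Delta\phi_{-2,1}^6$ give $(\phi_{0,1}^2-24\phi_{0,2})^3=E_4^3\phi_{-2,1}^6=j\,\xi_{0,6}$, which is the second relation once $\xi_{0,6}$ is expanded via Theorem \ref{th:Gri1}(c). The remaining relations follow after substituting $j=E_4^3/\Delta$, $j-1728=E_6^2/\Delta$ (using $E_4^3-E_6^2=1728\,\Delta$) and the expressions for the $E_{4,i}$ from Theorem \ref{th:Gri2}, then comparing the finitely many relevant Fourier coefficients on spaces that are one-dimensional. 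Finite generation over $\ZZ$ is then immediate from the explicit generating set.
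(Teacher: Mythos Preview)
Your approach is essentially the same as the paper's: both induct on the pole order and subtract integral elements built from the eight generators so as to kill the leading $q$-coefficient, reducing to lower pole order and ultimately to Theorem~\ref{th:Gri1}(c). The paper phrases the induction ``upstairs'' at weight $12n$ (subtract until the $q^0$-term of $\Delta^n\phi$ vanishes, then divide by $\Delta$), while you phrase it ``downstairs'' at weight $0$ (match the $q^{-N}$-coefficient), but after multiplying through by $\Delta^N$ these are identical.

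The one point where the paper is more efficient is exactly the step you flag as ``the main obstacle''. Rather than arguing index by index that $\sum_{m'} L_{m'}$ fills out the full rank-$(m{+}1)$ integral lattice, the paper simply writes down, for each $m$, an explicit family $G_{12,m}, G_{12,m}^{(1)},\dots,G_{12,m}^{(m)}$ in the ring generated by $E_4^3,\,E_4^2E_{4,1},\,E_4^2E_{4,2},\,E_4^2E_{4,3},\,\phi_{0,i}$ whose $q^0$-terms are upper-triangular with unit diagonal: the key trick is the recursion
\[
G_{12,m}=G_{12,m-3}\,\phi_{0,3}-G_{12,m-4}\,\phi_{0,4}\qquad(m\ge 4),
\]
which has $q^0$-term $1$ because $[\phi_{0,3}]_{q^0}-[\phi_{0,4}]_{q^0}=(\zeta^{\pm1}+2)-(\zeta^{\pm1}+1)=1$, together with $G_{12,m}^{(1)}=G_{12,m-1}\phi_{0,1}$ (giving $q^0$-term $\zeta^{\pm1}+10$) and $G_{12,m}^{(i)}=E_4^3\psi_{0,m}^{(i)}$ for $i\ge 2$. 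This sidesteps the delicate $(12,m)$ normalisation of $\psi^{(1)}_{0,m}$ entirely and makes the lattice-spanning claim immediate. If you insert this recursion into your argument, your ``heart of the argument'' paragraph collapses to two lines and the proof is complete.
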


\begin{proof}
Let $\phi\in J^{!,\ZZ}_{0,m}$. If $m=0$, the function $\phi$ is in fact a 
modular function with a possible pole at infinity, which implies that $\phi
\in \ZZ[j]$. When $m>0$, there exists a natural number $l$ such that 
$\Delta^l\phi$ is a weak Jacobi form by the definition of weakly 
holomorphic 
Jacobi forms. We denote the smallest one by $n$. As in the proof of Theorem 
\ref{th:Gri2}, we can construct the following weak Jacobi forms of weight 
$12$ and index $m$ with integral Fourier coefficients:
\begin{align*}
G_{12,0}&=E_4^3, \quad G_{12,1}=E_4^2E_{4,1}, \quad G_{12,2}=E_4^2E_{4,2}, 
\quad G_{12,3}=E_4^2E_{4,3},\\
G_{12,m}&=G_{12,m-3}\phi_{0,3}-G_{12,m-4}\phi_{0,4}=1+O(q), \quad m\geq 4,
\\
G_{12,m}^{(1)}&= G_{12,m-1}\phi_{0,1}=\zeta^{\pm 1}+10+O(q), \quad m\geq 
1,\\
G_{12,m}^{(i)}&= E_{4}^3\psi_{0,m}^{(i)}=\zeta^{\pm i}+c_0 +\sum_{1\leq 
j<i}c(j)\zeta^{\pm j}+O(q),\quad m\geq 2,
\end{align*}
where $c(j)\in \ZZ$ and $2\leq i \leq m$.
Therefore, there exist integers $x_i\in \ZZ$ such that the $q^0$-term of 
the function
$$
\Phi_n= \Delta^n\phi-x_0E_4^{3(n-1)}G_{12,m}-\sum_{i=1}^mx_iE_4^{3(n-1)}
G_{12,m}^{(i)}
$$
equals $0$. Thus, 
$$
{\Phi_n}/{\Delta} \in J_{12(n-1),m}^{w,\ZZ}
$$
and the smallest natural number $l$ such that $\Delta^l(\Phi_n/\Delta^n)\in 
J^{w,\ZZ}_{12*,m}$ is equal to $n-1$. Hence we complete the proof by 
induction on $n$.
\end{proof}
  
In the procedure of writing $\phi\in J_{0,m}^{!,\ZZ}$ in terms of the given generators, only the following Fourier coefficients are required to execute the algorithm:  
 $$
f(n,l): \quad \forall \; n\leq \left[\frac{m}{6}\right],  \forall\; l\in \ZZ.
$$
This fact follows from the above proof and Theorem \ref{th:Jacobi}.

To finish the paper, we consider the  Borcherds products related to the 
generators of $J^{!,\ZZ}_{0,*}$.
The Fourier coefficients of the first generator $j(\tau)$
determine the remarkable infinite product formula of
the difference $j(\tau_1)-j(\tau_2)$.
This formula was independently proved by Koike, Norton, and Zagier
in the 1980s. It was the first proto-example of the Borcherds automorphic 
product construction (see details in \cite{Bor92}).

As we mentioned in the introduction, the generators 
$\phi_{0,1}$, $\dots$, $\phi_{0,4}$ give the basic Siegel paramodular 
forms  $\Borch(\phi_{0,m})$
$$
\Delta_5\in S_5(\Gamma_1,\chi_2),\quad
\Delta_2\in S_2(\Gamma_2,\chi_4),\quad
\Delta_1\in S_1(\Gamma_3,\chi_6)
$$
and $\Delta_{1/2}\in M_{1/2}(\Gamma_{4},\chi_8)$
with the simplest possible divisor 
$\Gamma_m<(z=0)>$ where
$\Gamma_m$ is the corresponding paramodular group acting on the Siegel 
upper half-plane $\HH_2$ and $\chi_l$
is a character (or a multiplier system for $\chi_8$) 
of order $l$ (see \cite{GN98} for details).

To describe the last three generators, we  consider
$$
\phi(\tau,z)=\sum f(n,l)q^n\zeta^l \in J_{0,m}^{!,\ZZ}.
$$ 
It is known that the Fourier coefficients $f(n,l)$ depend only on the 
number $4nm-l^2$ and the class of $l$ in $\ZZ/2m\ZZ$. In addition, 
$f(n,l)=f(n,-l)$. The non-zero Fourier coefficient $f(n,l)$ satisfying $4nm-l^2<0$ is called \textit{singular}.  The divisor of $\Borch(\phi)$ is the union of a finite 
number of Humbert modular surfaces and the multiplicities of irreducible 
Humbert modular surfaces are determined by the singular Fourier 
coefficients of $\phi$ (see \cite[Theorem 2.1]{GN98}).
Thus, for $1\leq m \leq 4$, the singular Fourier coefficients of $\phi$ 
are completely determined by the Fourier coefficients $f(n,l)$ with 
$n\leq 0$. By Theorem \ref{th:weaklyJF}, it is possible and easy to 
construct the Jacobi form with any given integral singular Fourier 
coefficients. 
Therefore, we can construct any paramodular form of level $\leq 4$ whose 
divisor is a certain linear combination of Humbert modular surfaces.

For the last three generators in Theorem \ref{th:weaklyJF}
the singular part has a very simple form. We have 
$$
\psi_{0,1}=\frac{E_4^2E_{4,1}}{\Delta}-56\phi_{0,1}=q^{-1}+
\zeta^{\pm2}+70+O(q)\in J_{0,1}^{!,\ZZ}.
$$
The Borcherds product of $\psi_{0,1}$ is the basic Siegel modular form of 
odd weight, namely the Igusa modular form $\Delta_{35}$. 
Similarly, we have
\begin{align*}
\psi_{0,2}&=\frac{E_4^2E_{4,2}}{\Delta}-14\phi_{0,1}^2+216\phi_{0,2}
=q^{-1}+24+O(q)\in J_{0,2}^{!,\ZZ},\\
\psi_{0,3}&=\frac{E_4^2E_{4,3}}{\Delta}
-2\phi_{0,1}^3+33\phi_{0,1}\phi_{0,2}+90\phi_{0,3}
=q^{-1}+24+O(q)\in J_{0,3}^{!,\ZZ}.
\end{align*}
Then   
$\Borch(\psi_{0,2})$ and $\Borch(\psi_{0,3})$ are strongly reflective 
modular forms of weight $12$ with complete $2$-divisor for $\Gamma_2$ and $
\Gamma_3$, respectively. 

In the theory of the Lorentzian Kac--Moody algebras
the paramodular forms  $\Delta_5$, $\Delta_2$, $\Delta_1$  and 
$\Delta_{35}$ determine four basic algebras with 
a Weyl vector of negative norm
(in the signature $(2,1)$) and the forms $\Delta_{1/2}$,
$\Borch(\psi_{0,2})$ and $\Borch(\psi_{0,3})$ determine Kac--Moody algebras with 
a Weyl vector of norm zero.
In particular, $\Delta_{35}$ determines the Lorentzian Kac--Moody algebra 
with the simplest hyperbolic group isomorphic to $\hbox{GL}_2(\ZZ)$ and 
the smallest Cartan matrix (see \cite{GN98} for details). 

Let us remark that the results of the paper give some further applications 
to the theory of reflective modular forms. 
There is an identity involving singular Fourier coefficients 
of a weakly holomorphic Jacobi form of weight $0$ (see \cite[Lemma 2.2]
{GN98}):
\begin{equation}\label{eq:q^0}
\sum_{l\in \ZZ}f(0,l)-\frac{6}{m} \sum_{l\in\ZZ}f(0,l)l^2-24\sum_{n<0,l\in
\ZZ}f(n,l)\sigma_1(-n)=0.
\end{equation}
For any given Borcherds product $F$, if the multiplicities of its divisor 
are known, then we know the singular Fourier coefficients of the 
corresponding weakly holomorphic Jacobi form $\phi$ of weight $0$. Applying 
\eqref{eq:q^0} to $\phi$,  we can calculate the weight of $F$ as in 
\cite{Wan18} for the case of reflective modular forms.

\bigskip

\noindent
\textbf{Acknowledgements.}
The work is supported by the Laboratory of Mirror Symmetry NRU HSE (RF government grant, ag. N 14.641.31.0001). H. Wang is grateful to Max Planck Institute for Mathematics in Bonn for its hospitality and financial support. The authors thank the referee for helpful comments and suggestions that make this paper better.

\bibliographystyle{amsplain}

\end{document}